\documentclass[reqno]{amsart}

\usepackage{amsmath}
\usepackage{amsfonts}
\usepackage{amsthm}
\usepackage{amssymb}
\usepackage{graphicx}
\usepackage{graphics}
\usepackage{bm}
\usepackage{dsfont}
\usepackage{color}
\usepackage{enumerate}
\usepackage[font=footnotesize]{caption}
\usepackage[all]{xy}
\usepackage{xypic}
\allowdisplaybreaks

\numberwithin{equation}{section}

\newtheorem{thm}{Theorem}[section]
\newtheorem{cor}[thm]{Corollary}
\newtheorem{lem}[thm]{Lemma}
\newtheorem{prop}[thm]{Proposition}
\theoremstyle{definition}

\newtheorem{dfn}[thm]{Definition}
\newtheorem{rmk}[thm]{Remark}

\newcommand{\N}{\mathds{N}}

\newcommand{\Z}{\mathds{Z}}

\newcommand{\R}{\mathds{R}}

\newcommand{\diff}{\mathrm{d}}

\begin{document}

\title[Morse homology for strongly indefinite functionals on Banach spaces]{Morse homology for strongly indefinite functionals on Banach spaces}

\author[L. Asselle]{Luca Asselle}
\address{Ruhr-Universit\"at Bochum, Universit\"atsstra\ss e 150, 44801, Bochum, Germany}
\email{luca.asselle@rub.de}

\author[S. Cingolani]{Silvia Cingolani}
\address{Universit\'a degli Studi di Bari Aldo Moro, Via Orabona 4, 70125 Bari, Italy}
\email{silvia.cingolani@uniba.it}

\author[M. Starostka]{Maciej Starostka}
\address{Gda\'nsk University of Technology, Gabriela Narutowicza 11/12, 80233 Gda\'nsk, Poland}
\email{maciej.starostka@pg.edu.pl}

\date{\today}
\subjclass[2000]{58E05}
\keywords{Morse homology, $p$-Laplacian}

\begin{abstract}
In this paper we lay the foundations for the Morse theoretical study of strongly indefinite functionals on Banach manifolds by developing the local theory for a specific model class that captures several key analytical features also arising in the variational formulations of geometric problems such as Dirac-harmonic maps.
As a corollary, we obtain existence results of solutions to certain systems of quasilinear elliptic problems involving the $p$-area functional. 
Abstracting from the concrete setting, we then formulate general conditions ensuring that Morse homology is well-defined for strongly indefinite functionals on a Banach space. 
\end{abstract}

\maketitle





\vspace{-5mm}

\section{Introduction}

Since their development in the early 1920's, Morse theoretical methods have had a prominent role in the variational study of differential equations. In its original formulation, 
Morse theory deals with smooth functions $f$ on a finite dimensional manifold $M$ and gives a lower bound on the number of critical points of such functions in terms of the 
topological complexity of $M$, provided that critical points are non-degenerate. The topological complexity of $M$ 
is here measured by the sum of the Betti numbers of $M$, and the proof is obtained by looking at the topology changes of sublevel sets $\{f\leq a\}$, $a \in \R$. 
Applying Morse theory to differential equations, however, requires extending the theory to infinite-dimensional manifolds. Indeed, solutions of several differential equations 
(for instance geodesics, periodic orbits of Hamiltonian systems, and solutions of certain classes of partial differential equations) can be viewed as critical points of functionals defined on loop or mapping spaces. 
This problem has attracted the attention of many mathematicians, starting with the work of Palais and Smale, and has led to the development of theories that still play a central role in many 
areas of contemporary mathematics, notably Floer theory. 

At present, unfortunately, no Floer-theoretical approach is available to treat certain partial differential equations of fundamental importance, such as the equation of harmonic or Dirac–harmonic maps. These can be seen as critical points of a functional $f$, whose natural domain of definition is (a certain Hilbert bundle over) the space of maps with Sobolev regularity $W^{1,2}$ from a closed surface into a Riemannian manifold $Q$. Unless in very few examples, such a space does not carry a manifold structure. A direct attempt to address this issue (currently being investigated by the first author in collaboration with Brilleslijper and to appear in a forthcoming paper)  is to exploit the equivalent Hamiltonian formulation for (Dirac-)harmonic maps within the framework of polysymplectic geometry, see \cite{Brill:2024}. However, this approach requires additional assumptions on  $Q$ (namely, a real analytic K\"ahler structure), since at present severe compactness issues can be overcome only within a hyperk\"ahler neighborhood of the zero-section in $T^*Q$. 
A complementary strategy, first introduced by Sacks and Uhlenbeck in \cite{Sacks:1981} for harmonic maps and then extended to Dirac-harmonic maps in \cite{Jost:2021}, is to replace $f$ with a functional $f_\alpha$ which is well-defined on a space of $W^{1,2\alpha}$-maps, for some $\alpha>1$. The gain in regularity yields a Banach manifold structure, giving a more suitable analytical setting for critical point theory. Using a minimax argument, the authors of \cite{Jost:2021,Sacks:1981} proved the existence of critical points for $f_\alpha$ and information on $f$ is then obtained by taking the limit $\alpha \downarrow 1$. However, unlike in the harmonic map case, the limiting procedure may produce Dirac-harmonic maps with trivial spinorial component. On the other hand,  the existence of genuine Dirac-harmonic maps is guaranteed under suitable assumptions, and for flat targets even Morse homology\footnote{The case of flat targets is substantially simpler, since one has a well-defined Hilbert structure and the functional is of the form ``compact perturbation of a fixed quadratic form''.} can be constructed, see \cite{Isobe:2025} and the references therein. This naturally suggests attempting to construct a Morse homology for $f_\alpha$, and then trying to pass the information to the limit as $\alpha\downarrow 1$ (at least in the case where $Q$ is negatively curved). Unfortunately, this turns out to be a difficult task. Indeed, the available literature for the construction of Morse homology in Banach manifolds, see  \cite{Abbondandolo:2006lk,Asselle:2025,Asselle:2024}, deals only with functionals having finite Morse index, and hence cannot be applied to strongly indefinite\footnote{This roughly speaking means that the Morse index and co-index of critical points is always infinite.} functionals as $f_\alpha$. Also, going beyond the finite Morse index case requires to address several issues which are typical of strongly indefinite functionals and which will be discussed below (the interested reader can find more details in \cite{AM:05,Asselle:2022} and references therein).

With this in mind, in the present paper we provide the foundations to the Morse theoretical study of strongly indefinite functionals on Banach manifolds by developing the local (linear) theory for a specific model class, which captures most of the key analytical features that also arise in the variational formulations of geometric problems such as Dirac-harmonic maps. Indeed, although not directly related to Dirac-harmonic maps, the functionals considered below in \eqref{eq:functional} reproduce precisely the analytical difficulties that prevent the application of standard Morse theoretical machinery: strong indefiniteness, the lack of a Hilbert setting, and the need for an additional compatible structure. 
Abstracting from the concrete setting, in Section 5 we will then provide general conditions for strongly indefinite functionals on a Banach space ensuring that Morse homology is well-defined. Extending the local theory to a global one will be subject of subsequent work. 

\vspace{2mm}

Thus, let $\Omega \subset \R^n$, $n\geq 2$, be a bounded domain with sufficiently regular boundary, and let $p, q>\frac n2$. We consider the $C^2$-smooth functional 
\begin{align}
& f: X:= W^{1,2p}_0(\Omega)\times W^{1,2q}_0(\Omega) \to \R, \nonumber \\ & f(u,v) := \frac 1{2p}  \int_{\Omega} \big (1+ |\nabla u|^2)^p\, \diff x - \frac 1{2q} \int_\Omega \big (1+|\nabla v|^2)^q\, \diff x - \int_\Omega G(u,v)\, \diff x,
\label{eq:functional}
\end{align}
where $G:\R^2 \to \R$ is a function of class $C^2$ such that
\begin{equation}
|G(x_1,x_2)| \lesssim 1 +  |x_1|^{\alpha_1} + |x_2|^{\alpha_2},  \quad \forall (x_1,x_2)\in \R^2,
\label{eq:growthG}
\end{equation}
with $0\leq \alpha_1<\beta_1(p,q)$, $0\leq \alpha_2< \beta_2(p,q)$. Here
$$\left \{\begin{array}{r} \beta_1(p,q)>p, \ \ \beta_2(p,q)>q \qquad \text{if} \ p\leq q < 2p-1\ \text{or}\ q\leq p <2q-1,\\  \beta_1(q,p)=\beta_2(q,p) := \min \{p,q\} \ \qquad \qquad \qquad \qquad \qquad \ \, \text{otherwise}.\end{array}\right.$$
so that linear growth conditions for $G$ (namely $\alpha_1=p$ and $\alpha_2=q$) are allowed if $p\leq q<2p-1$ (and, similarly, if $q\leq p < 2q-1$).
Condition \eqref{eq:growthG} and the assumption $p,q>\frac n2$ are not optimal and can likely be weakened. 
Extending the construction to the regime $p,q\leq \frac n2$, however, would require combining the foundational arguments developed here 
with the finer analysis of \cite{Asselle:2025}, to address the possible lack of persistence of the splitting 
 induced by the second differential at critical points. Since the aim of this paper is to present the basic, conceptual aspects of the construction of Morse homology rather than to seek minimal hypotheses, we postpone these extensions to future work, where we will also allow for more general principal parts in the spirit of \cite{Asselle:2025}. 
 Finally, unlike the ``one equation case'' considered in \cite{Asselle:2025,Asselle:2024}, we do not expect the construction to extend to superlinear growth conditions in general. Indeed, as shown in \cite{Liu:2010} (see also \cite[Lemma 4.3]{Asselle:2025}), in the one equation case superlinear growth conditions for the non-linearity require  a ``linear'' control from below in order to obtain the Palais-Smale (or Cerami) condition. In the present framework this would correspond to a two-sided linear control.

Hereafter we use the symbol ``$\lesssim$'' for any inequality which holds up to a multiplicative constant. 
As it is well known, critical points of the functional $f$ in \eqref{eq:functional} correspond to non-trivial solutions of the following system of quasilinear elliptic problems 
\begin{equation}
\left \{\begin{array}{r}
 \text{div} \Big [\big (1+|\nabla u|^2 \big )^{p-1} \nabla u \Big ] = D_u G(u,v) \quad \text{in} \ \Omega, \\ 
- \text{div} \Big [\big (1+|\nabla v|^2 \big )^{q-1} \nabla v \Big ] = D_v G(u,v) \quad \text{in} \ \Omega, \\
u=v=0 \qquad \qquad  \qquad \qquad \text{on} \ \partial \Omega. \end{array}\right .
\label{eq:system}
\end{equation}

Systems involving quasilinear operators of $p$-laplacian type model several phenomena in non-Newtonian mechanics, nonlinear elasticity and glaciology, combustion theory, population biology; see for instance \cite{Diaz:1994,Glowing:2003,Manasevich:1998}. Existence, non-existence and regularity results for such quasilinear elliptic systems are obtained by various authors, see e.g.  \cite{Boccardo:2002,Bozhkov:2003,Velin:1993}. 
As far as the system in \eqref{eq:system} is concerned, standard regularity theory shows that solutions of it are actually contained in $C^1(\overline \Omega)$, see \cite{Lieberman:1988}, and their existence
 follows immediately once we prove that, for a generic $f$, Morse homology is well-defined and isomorphic to the singular homology of $X$. 
To the best of our knowledge, systems of the form \eqref{eq:system} had not previously been analyzed through Morse-theoretical methods, due to their strong indefiniteness and the Banach nature of the ambient space.
In fact, systems such as \eqref{eq:system} are of a completely different nature than those considered in \cite{Carmona:2013}, unless $G$ is of the form $g(u)+h(v)$, in which case the system actually reduces to two uncoupled elliptic equations, and we shall see later that the Morse index and co-index of solutions to \eqref{eq:system}, regarded as critical points of $f$ in \eqref{eq:functional}, is always infinite, and in fact, \eqref{eq:system} may be viewed as  a Banach-analogue of the indefinite elliptic systems involving the Laplacian considered in \cite{Angenent:1999}. For this reason, the classical approach to Morse theory based on deformation of sublevel sets and cell attachments and used for instance in \cite{Carmona:2013,Cingolani:2003} is of no use here, as critical groups always vanish\footnote{One usually says that critical points with infinite Morse index and co-index are invisible to homotopy.}. 
We also remark that the existence of solutions of \eqref{eq:system} could likely be obtained also by other methods, for instance via finite dimensional approximations or through the Conley index approach discussed in \cite{Izy}. However, the Morse theoretical approach developed in the present paper is the most natural one for applications to non-linear settings (such as the case of Dirac-harmonic maps discussed above). 

In order to construct Morse homology for the functionals in \eqref{eq:functional} we need to address several issues. First, we must show that critical points are generically non-degenerate in the sense described below, which is the most natural one for the construction of Morse homology in a Banach (non Hilbert) setting (see \cite{Asselle:2025,Asselle:2024} for further details on this notion of non-degeneracy). 
\begin{dfn}
Let $Y$ be a Banach space, and let $F:Y\to \R$ be a functional of class $C^1$. A critical point $y_0$ of $F$ is called \textit{non-degenerate} if there exist a neighborhood $\mathcal U\subset Y$ of $y_0$ and a bounded linear hyperbolic operator $L:Y\to Y$ such that $F$ is a Lyapounov function on $\mathcal U$ for the linear flow induced by $L$. We call $F$ a \textit{Morse function} if all its critical points are non-degenerate. 
\label{def:nondegintro}
\end{dfn}

Non-degeneracy will be proved in Section \ref{sec:2} under the assumption that the second differential $\mathrm d^2 f:X\to X^*$ at each critical point is injective - a generic condition, as one sees by adapting the argument in \cite{Cingolani:2007}). The proof follows the line of \cite{Asselle:2025,Asselle:2024}, although some additional care is needed here because the Morse index and co-index of critical points of $f$ are always infinite. For this reason we also introduce a natural notion of relative Morse index, which is then used in the definition of the chain complex. 

Second, in a Banach setting there is no canonical way to associate a gradient flow to $f$. Therefore, in Section \ref{sec:3}, we construct a gradient-like vector field $W$ for $f$ such that the pair $(f,W)$ satisfies all the properties needed for the definition of Morse homology, beginning with the Palais-Smale condition. The latter will follow from the growth condition \eqref{eq:growthG}, which will also allow us to construct $W$ with at most linear growth and thus inducing a global flow. Such a linear control at infinity is, at present, essential for our approach: unlike in the Hilbert case, where normalizing a pseudo-gradient is typically harmless, in the Banach framework of this paper it is not clear to us how to ensure that the Palais–Smale condition is preserved after the normalization needed to produce a global flow.

Third, despite its crucial role, in the strongly indefinite case the Palais-Smale condition alone is unfortunately not enough to ensure that the intersection between stable and unstable manifolds of pairs of critical points is finite dimensional and pre-compact  (see for instance \cite{AM:05,Asselle:2022}). These two properties are then obtained by choosing $W$ within the smaller class of vector fields which are of the form ``compact perturbation of a fixed linear vector field'', hence in particular compatible with an additional structure naturally suggested by the problem (and which is need to make comparisons). 
Transversality is achieved then in Section \ref{sec:4} by a generic perturbation of the gradient-like vector field $W$ which does not destroy compactness. 

With all these issues being addressed, the construction of Morse homology is now standard. We define a chain complex generated by critical points and graded by the relative Morse index, whose boundary operator counts the number of flow lines (modulo two) connecting pairs of critical points whose relative Morse indices differ by one. The functoriality theorem (see Section \ref{sec:5}) shows that the resulting Morse homology is independent of the choice of $f$ and the gradient-like vector field $W$, thus yielding the following 

\begin{thm}
Let $f$ be as in \eqref{eq:functional}, with $G$ satisfying \eqref{eq:growthG}. Then, for a generic choice of $G$, the Morse homology $HM_*(f;\Z_2)$ is well-defined and isomorphic to $H_*(X,\Z_2)$. As a corollary, the system of quasilinear elliptic problems \eqref{eq:system} has at least one solution for every choice of non-linearity satisfying \eqref{eq:growthG}.
\label{thm:main}
\end{thm}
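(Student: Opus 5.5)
The proof follows the roadmap sketched in the introduction, applied to the splitting $X = X^+\oplus X^-$ with $X^+=W^{1,2p}_0(\Omega)\times\{0\}$ and $X^-=\{0\}\times W^{1,2q}_0(\Omega)$. Write $f(u,v)=A(u)-B(v)-\mathcal G(u,v)$ with $A(u)=\frac1{2p}\int(1+|\nabla u|^2)^p$, $B(v)=\frac1{2q}\int(1+|\nabla v|^2)^q$ and $\mathcal G(u,v)=\int G(u,v)$.

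\textbf{Step 1: genericity and non-degeneracy.} Since $p,q>\frac n2$, Morrey's embedding $W^{1,2p}_0\hookrightarrow C^0(\overline\Omega)$ is compact, so $\mathrm d\mathcal G$ and $\mathrm d^2\mathcal G$ are compact. Adapting the transversality argument of \cite{Cingolani:2007}, I would show that for $G$ in a residual set, $\mathrm d^2 f(x_0):X\to X^*$ is injective at every critical point $x_0$. At such a point, $\mathrm d^2A(u_0)$ is a positive, coercive, Fredholm operator $W^{1,2p}_0\to W^{-1,(2p)'}$ (a weighted Laplacian with $C^0$ coefficients, using $u_0\in C^1(\overline\Omega)$). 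The same holds for $\mathrm d^2B(v_0)$. So $\mathrm d^2 f(x_0)$ is a compact perturbation of $\mathrm{diag}(\mathrm d^2A,-\mathrm d^2B)$, and injectivity gives invertibility. Following \cite{Asselle:2025,Asselle:2024}, I would take the hyperbolic operator $L=P^+-P^-$, suitably modified by a finite-rank correction on the finite-dimensional subspace where the compact perturbation changes the sign. A Taylor expansion then shows that $f$ is a Lyapounov function near $x_0$, in the sense of Definition \ref{def:nondegintro}. The relative Morse index $m(x_0)$ is defined as the relative dimension of the negative subspace of $L$ with respect to $X^-$; it is finite because the correction is compact.

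\textbf{Step 2: the gradient-like field.} I would build $W=W_0+K$, where $W_0(u,v)=(-u,v)$ (up to the duality maps $J_p,J_q$) and $K$ is locally Lipschitz, compact, and has linear growth. The field is glued with a partition of unity from local pseudo-gradients, and near each critical point it equals the linear field $L$ of Step 1. The Palais--Smale condition for $f$ comes from \eqref{eq:growthG}. Testing $\mathrm df(u,v)$ on $(u,0)$ and $(0,v)$ gives $\|u\|^{2p}\lesssim \|\mathrm df\|\,\|u\|+1+\|u\|_\infty^{\alpha_1}\cdots$, with cross terms controlled by $|D_uG|$. The restriction $\alpha_i<\beta_i(p,q)$ is used exactly here, to absorb the mixed terms $|u|^{\alpha_1}$ against $\|v\|^{2q}$. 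The condition $q<2p-1$ appears when one converts $\|\nabla v\|_{2q}$ into powers of $\|\nabla u\|_{2p}$. Boundedness of PS sequences together with the $(S_+)$-property of $\mathrm dA$ and $\mathrm dB$ then gives convergence. Linear growth of $W$ yields a global flow.

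\textbf{Step 3: compactness, transversality and invariance.} Since $W$ is a compact perturbation of the fixed hyperbolic field $W_0$ compatible with the splitting, the standard argument from \cite{AM:05,Asselle:2022} applies. The unstable manifold $W^u(x)$ is a compact perturbation of $X^-$ and the stable manifold $W^s(y)$ is a compact perturbation of $X^+$, so they form a Fredholm pair of index $m(x)-m(y)$. Together with Palais--Smale, this makes the connecting spaces finite dimensional and precompact modulo broken trajectories. A generic compact perturbation of $K$ supported away from the critical points (a Sard--Smale argument) achieves transversality. The boundary operator counts flow lines mod $2$ between points of index difference one, and $\partial^2=0$ follows by gluing and compactness. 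For invariance, I would use continuation along homotopies $G_s$, and in particular deform to $G\equiv0$. That functional has exactly one critical point $(0,0)$, which is non-degenerate with relative index $0$, and any two admissible choices give chain-homotopic complexes. Hence $HM_*(f;\Z_2)\cong\Z_2$ in degree $0$, which is $H_*(X;\Z_2)$ because $X$ is contractible.

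\textbf{Step 4: existence, and the main obstacle.} For an arbitrary $G$ satisfying \eqref{eq:growthG}, I would approximate it by generic $G_k$. Since $HM_0\neq0$, each $f_k$ has a critical point $x_k$. The a priori bound of Step 2 is uniform in $k$, so the same PS argument gives $x_k\to x$, and $x$ is a critical point of $f$, hence a solution of \eqref{eq:system}. I expect the main obstacle to be Step 2. One must build a pseudo-gradient in the restricted class ``fixed linear field plus compact'' that still has linear growth and still satisfies Palais--Smale on the Banach space. This requires the delicate growth estimates on $G$ and the $(S_+)$ argument for the degenerate quasilinear principal parts. The Fredholm/compactness properties of Step 3 also hinge on this class.
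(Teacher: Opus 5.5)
\textbf{Step 1 has a genuine gap, exactly where the Banach setting differs from the Hilbert one.} You claim that $\mathrm d^2A(u_0):W^{1,2p}_0(\Omega)\to W^{-1,(2p)'}(\Omega)$ is coercive and Fredholm. From this you deduce that injectivity of $\mathrm d^2 f(x_0)$ gives invertibility, and you get the Lyapounov property from a Taylor expansion. For $p>1$ this fails.
\begin{itemize}
\item $\mathrm d^2A(u_0)[\xi,\xi]$ is comparable to $\|\nabla\xi\|_2^2$, not to $\|\xi\|^2_{W^{1,2p}}$.
\item Every functional $\mathrm d^2 f(x_0)[x,\cdot]$ extends continuously to $W^{1,2}_0\times W^{1,2}_0$. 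So the range of $\mathrm d^2 f(x_0):X\to X^*$ lies in the image of $(W^{1,2}_0\times W^{1,2}_0)^*$ in $X^*$, which is dense but not closed because the $W^{1,2}$ and $W^{1,2p}$ norms are not equivalent. Hence $\mathrm d^2 f(x_0)$ is never Fredholm, for any $G$.
\item In $\mathrm d f(x_0+x)[Lx]=\mathrm d^2 f(x_0)[x,Lx]+o(\|x\|_X^2)$, the leading term controls only $\|x\|^2_{W^{1,2}}$. The remainder cannot be absorbed, since for $y$ near $x_0$ in $X$ the coefficients of $\mathrm d^2 f(y)-\mathrm d^2 f(x_0)$ are small only in $L^r$, not in $L^\infty$.
\end{itemize}

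The paper avoids this as follows. It extends $\mathrm d^2 f(x_0)$ to the weighted Hilbert space $\mathbb H_{x_0}\cong W^{1,2}_0\times W^{1,2}_0$, where it reads $\mathrm{diag}(\mathrm{id},-\mathrm{id})+K$ with $K$ compact. It takes the spectral splitting $\mathbb H^-\oplus\mathbb H^+$; the kernel lies in $X$ by elliptic regularity and so vanishes by injectivity. It sets $X^\pm=X\cap\mathbb H^\pm$. The key step is the \emph{persistence} of this splitting (Lemma 2.3): $\pm\mathrm d^2 f(y)[w,w]\geq c'\|w\|^2_{x_0}$ for all $w\in X^\pm$ and all $y$ in an $X$-neighbourhood of $x_0$. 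The Lyapounov inequality then follows from $\mathrm d f(x_0+x)[Lx]=\int_0^1\mathrm d^2 f(x_0+sx)[Lx,x]\,\mathrm ds$, with no Taylor remainder at all. Because $f$ is strongly indefinite, persistence is not just lower semicontinuity. That argument handles only the convex $u$-part. For the concave $v$-part one needs strong convergence of the $v$-components, which comes from $\mathbb H^+$ being a compact perturbation of $W^{1,2}_0\times\{0\}$.

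\textbf{Step 2 names the crux but does not resolve it.} You correctly identify the need for a pseudo-gradient of the form ``fixed linear field plus compact'', but you do not construct one. Your hint of taking $W_0=(-u,v)$ ``up to the duality maps $J_p,J_q$'' points the wrong way. A field built from the nonlinear maps $D_p,D_q$, such as $-D^{-1}(\mathrm df)$, is not of this form and need not preserve essentially vertical sets. It would also spoil your Step 3, whose Fredholm theory needs $\mathrm dW(\gamma(t))=\mathrm{diag}(-\mathrm{id},\mathrm{id})+\text{compact}$.

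The paper's device is to invert $D_p,D_q$ only on the compact part: $V(u,v)=(-u-D_p^{-1}K_u(u,v),\,v-D_q^{-1}K_v(u,v))$. This is visibly $(-u,v)$ plus a compact map. Setting $\xi=-D_p^{-1}K_u(u,v)$ and $\mu=D_q^{-1}K_v(u,v)$, one gets $\mathrm df(u,v)[V(u,v)]=-(D_p(u)-D_p(\xi))[u-\xi]-(D_q(v)-D_q(\mu))[v-\mu]$. Monotonicity then yields $\mathrm df[V]\leq-4c(\|\mathrm d_uf\|_*^{p'}+\|\mathrm d_vf\|_*^{q'})$. This estimate is what transfers the Palais--Smale condition from $f$ to $(f,W)$ after gluing, with the linear fields $L_{\bar x}$ used near critical points. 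Linear growth of $V$ comes from $\|D_p^{-1}\varphi\|\lesssim 1+\|\varphi\|_*^{1/(2p-1)}$. This, and not the Palais--Smale estimate as you suggest, is where $\alpha_1,\alpha_2\leq\min\{2p-1,2q-1\}$ is used, and hence where the condition $q<2p-1$ in the linear-growth regime comes from.

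Your Step 4 is a valid alternative route. The paper needs no approximation there: either $f$ is Morse, so $HM_0\neq 0$ forces a critical point, or $f$ already has a degenerate critical point.
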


\vspace{2mm}

\textbf{Acknowledgments.} L.A. is partially supported by the DFG-grant 540462524 {\sl \lq\lq Morse theoretical methods in Analysis, Dynamics and Geometry"}. S.C. is supported by PNRR MUR project PE0000023 NQSTI - National Quantum Science and Technology Institute (CUP H93C22000670006), and partially supported by INdAM-GNAMPA.   M.S. is partially supported by the NCN grant 2023/05/Y/ST1/00186 {\sl \lq\lq Morse theoretical methods in Analysis, Dynamics and Geometry''}. 


\section{Preliminary computations, relative Morse index, and non-degeneracy}
\label{sec:2}

We readily compute for the differential of $f$ in  \eqref{eq:functional} in the $u$ resp. $v$ direction 
\begin{align}
\diff_u f(u,v)[(\xi,0)] &= \int_\Omega \big (1+|\nabla u|^2\big )^{p-1} \langle \nabla u,\nabla \xi \rangle \, \diff x - \int_\Omega \partial_1 G(u,v) \xi\, \diff x, \nonumber \\
\diff_v f(u,v)[(0,\eta)] &= - \int_\Omega \big (1+|\nabla v|^2\big )^{q-1} \langle \nabla v,\nabla \eta \rangle \, \diff x - \int_\Omega \partial_2 G(u,v) \eta\, \diff x.
\label{eq:differentialf}
\end{align}
If $(\bar u,\bar v) \in X$ is a critical point of $f$, then the second differential of $f$ at $(\bar u,\bar v) $ is 
\begin{align*}
\diff^2 & f(\bar u,\bar v) [(\xi_1,\eta_1),(\xi_2,\eta_2)] \\
& = \int_\Omega \big (1+|\nabla \bar u|^2\big )^{p-1} \langle \nabla \xi_1,\nabla \xi_2 \rangle \, \diff x + 2(p-1) \int_\Omega \big (1+|\nabla \bar u|^2)^{p-2} \langle \nabla \bar u,\nabla \xi_1\rangle \langle \nabla \bar u,\nabla \xi_2\rangle \, \diff x\\
& -  \int_\Omega \big (1+|\nabla \bar v|^2\big )^{q-1} \langle \nabla \eta_1,\nabla \eta_2 \rangle \, \diff x - 2(q-1) \int_\Omega \big (1+|\nabla \bar v|^2)^{q-2} \langle \nabla \bar v,\nabla \eta_1\rangle \langle \nabla \bar v,\nabla \eta_2\rangle \, \diff x\\
& - \int_\Omega \Big (\partial_{11} G(\bar u,\bar v) \xi_1\xi_2 + \partial_{12} G(\bar u ,\bar v ) (\xi_1\eta_2 + \xi_2\eta_1)+ \partial_{22}G(\bar u, \bar v) \eta_1\eta_2 \Big ) \, \diff x.
\end{align*}

Throughout the paper we assume that $$\diff^2 f(\bar u,\bar v):X\to X^*$$ is injective. 
Proceeding as in \cite{Asselle:2024}, and recalling that by standard regularity theory $(\bar u,\bar v) \in C^1(\overline \Omega)$, we introduce on $C^\infty_c (\Omega) \times C^\infty_c(\Omega)$ the scalar product 
\begin{align*}
\langle (\xi_1,\eta_1)&,(\xi_2,\eta_2) \rangle_{(\bar u, \bar v)} \\
			& := \int_\Omega \big (1+|\nabla \bar u|^2\big )^{p-1} \langle \nabla \xi_1,\nabla \xi_2 \rangle \, \diff x + 2(p-1) \int_\Omega \big (1+|\nabla \bar u|^2)^{p-2} \langle \nabla \bar u,\nabla \xi_1\rangle \langle \nabla \bar u,\nabla \xi_2\rangle \, \diff x\\
			& \ + \int_\Omega \big (1+|\nabla \bar v|^2\big )^{q-1} \langle \nabla \eta_1,\nabla \eta_2 \rangle \, \diff x + 2(q-1) \int_\Omega \big (1+|\nabla \bar v|^2)^{q-2} \langle \nabla \bar v,\nabla \eta_1\rangle \langle \nabla \bar v,\nabla \eta_2\rangle \, \diff x.
\end{align*}
and define the Hilbert space 
$$\mathbb H_{(\bar u,\bar v)} := \overline{C^\infty_c(\Omega) \times C^\infty_c (\Omega)}^{\langle \cdot,\cdot \rangle_{(\bar u, \bar v)}}.$$
Clearly, $\mathbb H_{(\bar u,\bar v)}\cong W^{1,2}_0(\Omega) \times W^{1,2}_0(\Omega)$. In particular, we have an induced continuous embedding $X \hookrightarrow \mathbb H_{(\bar u, \bar v)}$. 
The operator $\diff^2 f(\bar u,\bar v)$ extends to a bounded operator $H_{(\bar u,\bar v)}: \mathbb H_{(\bar u, \bar v)}\to \mathbb H_{(\bar u, \bar v)}^*$, which after using Riesz' representation theorem reads 
$$H_{(\bar u,\bar v)} = \left (\begin{matrix} \text{id} & 0 \\ 0 & -\text{id} \end{matrix}\right ) + K.$$
Here, $K$ is the compact operator uniquely determined by 
$$\langle K(\xi_1,\eta_1), (\xi_2,\eta_2)\rangle = -\int_\Omega \Big (\partial_{11} G(\bar u,\bar v) \xi_1\xi_2 + \partial_{12} G(\bar u ,\bar v ) (\xi_1\eta_2 + \xi_2\eta_1)+ \partial_{22}G(\bar u, \bar v) \eta_1\eta_2 \Big ) \, \diff x.$$
It follows that $H_{(\bar u,\bar v)}$ is a self-adjoint index-zero Fredholm operator whose spectrum consists of real eigenvalues with finite multiplicity accumulating to $\pm 1$. Accordingly, we have a $\langle \cdot,\cdot \rangle_{(\bar u,\bar v)}$-orthogonal decomposition 
$$\mathbb H_{(\bar u, \bar v)} \cong \mathbb H^- \oplus \mathbb H^0 \oplus \mathbb H^+$$
and there exists a constant $c>0$ such that 
\begin{align}
\langle H_{(\bar u,\bar v)} (\xi,\eta), (\xi,\eta)\rangle_{(\bar u,\bar v)} &\geq c \|(\xi,\eta)\|_{(\bar u,\bar v)}^2, \quad \forall (\xi,\eta) \in \mathbb H^+, \label{eq:pospart}\\
\langle H_{(\bar u,\bar v)} (\xi,\eta), (\xi,\eta)\rangle_{(\bar u,\bar v)} &\leq -c \|(\xi,\eta)\|_{(\bar u,\bar v)}^2, \quad \forall (\xi,\eta) \in \mathbb H^-. \label{eq:negpart}
\end{align}
Since $\dim \mathbb H^0 < +\infty$, standard regularity theory implies that $\mathbb H^0\subset X$, and this forces $\mathbb H^0=\{0\}$ because by assumption $\diff^2f(\bar u,\bar v)$ is injective. Thus, setting $X^\pm := X\cap \mathbb H^\pm$, we obtain the splitting 
$$X \cong X^- \oplus X^+,$$
with \eqref{eq:pospart} and \eqref{eq:negpart} holding on $X^+$ and $X^-$ respectively. This is precisely the kind of splitting obtained in \cite{Asselle:2024} (see also \cite{Asselle:2025,Cingolani:2003}). However, the crucial difference is that here $\dim X^- =+\infty$, namely the Morse index is infinite. To overcome this difficulty we replace the Morse index with a relative version of it, following \cite{AM:01}. 

We observe that $V:=\mathbb H^-$ is a compact perturbation\footnote{In the reference \cite{AM:01} the authors talk about commensurable subspaces. However,  the usual terminology in subsequent literature, e.g. \cite{AM:05,Asselle:2022}, is that of compact perturbations.} of $W:=\{0\}\times W^{1,2}_0(\Omega)$ meaning that the difference $P_V-P_W$ of the corresponding orthogonal projections  is a compact operator. This follows immediately from Proposition 2.2 in \cite{AM:01}, noticing that the difference $H_{(\bar u,\bar v)} - (\text{id},-\text{id}) = K$ is a compact operator. The identities 
\begin{align*}
P_{V^\perp}P_W &= (P_W-P_V)P_W, \qquad P_{W^\perp}P_V = (P_V-P_W)P_V,
\end{align*}
show then that both $P_{W^\perp}P_V$ and $P_{V^\perp} P_W$ are compact, hence the corresponding fixed points spaces, that is the subspaces $W\cap V^\perp$ and $W^\perp \cap V$, are finite dimensional.  
In particular, the relative dimension 
\begin{align*}
\dim (\{0\}\times W^{1,2}_0(\Omega), \mathbb H^- ) & := \dim ((\{0\}\times W^{1,2}_0(\Omega))^\perp \cap \mathbb H^- ) - \dim ((\{0\}\times W^{1,2}_0(\Omega))\cap (\mathbb H^-)^\perp)\\
			 & = \dim ((W^{1,2}_0(\Omega)\times \{0\}) \cap \mathbb H^- ) - \dim ((\{0\}\times W^{1,2}_0(\Omega))\cap \mathbb H^+)
\end{align*}
is a well-defined integer. We also notice that standard regularity theory implies that 
$$(W^{1,2}_0(\Omega)\times \{0\}) \cap \mathbb H^-\subset X^-,\quad (\{0\}\times W^{1,2}_0(\Omega))\cap \mathbb H^+\subset X^+,$$
so that 
$$\dim (\{0\}\times W^{1,2}_0(\Omega), \mathbb H^- ) = \dim ((W^{1,2p}_0(\Omega)\times \{0\}) \cap X^- ) - \dim ((\{0\}\times W^{1,2q}_0(\Omega))\cap  X^+).$$
\begin{dfn}
Let $(\bar u,\bar v)$ be a critical point of $f$ as in \eqref{eq:functional}. Then its \textit{relative Morse index} is defined as 
$$\mu_-(\bar u,\bar v) := \dim ((W^{1,2p}_0(\Omega)\times \{0\}) \cap X^- ) - \dim ((\{0\}\times W^{1,2q}_0(\Omega))\cap  X^+) \in \mathbb Z.$$
\end{dfn}

In the following lemma we show that the splitting $X\cong X^-\oplus X^+$ persists in a neighborhood of $(\bar u,\bar v)$. The proof follows the lines of \cite[Lemma 4.4]{Cingolani:2003} (see also \cite[Lemma 2.4]{Asselle:2024}), it requires however some extra care because of the strong indefinite nature of the functional $f$. 

\begin{lem}
There exists $r>0$ and $c'\in (0,c]$ such that for all $(u,v)\in B_r^X(\bar u,\bar v)$ we have
\begin{align*}
\diff^2 f(u,v)[(\xi,\eta),(\xi,\eta)] &\geq c' \|(\xi,\eta)\|_{(\bar u,\bar v)}^2 , \quad \forall (\xi,\eta)\in X^+,\\
\diff^2 f(u,v)[(\xi,\eta),(\xi,\eta)] &\leq -c' \|(\xi,\eta)\|_{(\bar u,\bar v)}^2 , \quad \forall (\xi,\eta)\in X^-.
\end{align*}
\label{lem:splittingpersistence}
\end{lem}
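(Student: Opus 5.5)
Compare $\diff^2 f(u,v)$ with $\diff^2 f(\bar u,\bar v)$ on a fixed $(\xi,\eta)\in X^\pm$, splitting the difference into a principal-part contribution (gradient terms) and a lower-order contribution (the $G$-terms). The estimates \eqref{eq:pospart}--\eqref{eq:negpart} give $\pm\,\diff^2 f(\bar u,\bar v)[(\xi,\eta),(\xi,\eta)]\ge c\|(\xi,\eta)\|^2_{(\bar u,\bar v)}$ on $X^\pm$. It therefore suffices to show
\[
\big|\diff^2 f(u,v)[(\xi,\eta),(\xi,\eta)]-\diff^2 f(\bar u,\bar v)[(\xi,\eta),(\xi,\eta)]\big|\le \tfrac c2 \|(\xi,\eta)\|_{(\bar u,\bar v)}^2
\]
for $(u,v)$ close to $(\bar u,\bar v)$ in $X$, with $c'=c/2$. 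A uniform smallness of this form cannot hold, since $X$ carries a stronger norm than $\mathbb H$. The argument must instead use structure in the principal part, as in \cite[Lemma 4.4]{Cingolani:2003}.

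\textbf{Lower-order terms.} Since $p,q>\frac n2$, we have $W^{1,2p}_0,W^{1,2q}_0\hookrightarrow C^0(\overline\Omega)$. Hence $(u,v)\to(\bar u,\bar v)$ in $X$ gives uniform convergence, and $\partial_{ij}G(u,v)\to\partial_{ij}G(\bar u,\bar v)$ in $L^\infty$ because $G\in C^2$. The $G$-part of the difference is then bounded by $o(1)\|(\xi,\eta)\|_{L^2}^2\le o(1)\|(\xi,\eta)\|^2_{(\bar u,\bar v)}$, using Poincaré and the fact that the $\bar u$-weights are bounded below by $1$.

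\textbf{Principal part.} This is where strong indefiniteness enters, and I expect it to be the main obstacle. Write $A_u(x)$ for the symmetric matrix $(1+|\nabla u|^2)^{p-1}I+2(p-1)(1+|\nabla u|^2)^{p-2}\nabla u\otimes\nabla u$. It satisfies $A_u\ge I$ when $p\ge1$; since $p>\frac n2\ge1$, convexity holds. The principal part of $\diff^2 f(u,v)$ is $\int\langle A_u\nabla\xi,\nabla\xi\rangle-\int\langle B_v\nabla\eta,\nabla\eta\rangle$. We cannot simply say $A_u\ge A_{\bar u}-o(1)$, because $\nabla u\to\nabla\bar u$ only in $L^{2p}$, not in $L^\infty$. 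Moreover, the $\xi$- and $\eta$-parts enter with opposite signs while elements of $X^\pm$ mix both components. My plan is to exploit the structure of $X^+$: it is a compact perturbation of $W^{1,2}_0\times\{0\}$, i.e.\ $X^+=\{(\xi,\eta)\}$ with $\eta$ controlled by $\xi$ up to a finite-dimensional/compact piece. More precisely, the projection $P_{\{0\}\times W^{1,2}_0}$ restricted to $\mathbb H^+$ is compact. I would therefore split $(\xi,\eta)=(\xi,0)+(0,\eta)$. The $\xi$-term satisfies $\int\langle A_u\nabla\xi,\nabla\xi\rangle\ge\int\langle A_{\bar u}\nabla\xi,\nabla\xi\rangle-\int\langle(A_{\bar u}-A_u)_+\nabla\xi,\nabla\xi\rangle$. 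For the negative-sign error I would use that $A_u\ge I$ pointwise while $A_{\bar u}$ is bounded because $\bar u\in C^1(\overline\Omega)$. On the set where $|\nabla u-\nabla\bar u|\ge\delta$, whose measure tends to $0$, I would bound $A_{\bar u}-A_u\le \|A_{\bar u}\|_\infty$. The resulting error is controlled by equi-integrability of $|\nabla\xi|^2$. That control is only available on compact families, which is precisely what the compact-perturbation property supplies for the $\eta$-component on $X^+$. For the $\xi$-component on $X^+$, the sign is favorable: increasing $A_u$ only helps. The residual difficulty is that $A_u$ may be smaller than $A_{\bar u}$ on small sets. If the direct approach fails, I would first try to handle this as in \cite{Cingolani:2003} by absorbing it, using $A_u\ge I$ together with the fact that $\langle\cdot,\cdot\rangle_{(\bar u,\bar v)}$ is equivalent to the $W^{1,2}$ norm.

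\textbf{Conclusion and symmetric case.} The $\eta$-terms enter with the favorable sign on $X^-$ and with the unfavorable sign on $X^+$. For the unfavorable combinations, namely the $\eta$-part on $X^+$ and the $\xi$-part on $X^-$, the relevant components range over the finite-dimensional spaces $(\{0\}\times W^{1,2q}_0)\cap X^+$ and $(W^{1,2p}_0\times\{0\})\cap X^-$, plus compact remainders. On finite-dimensional subspaces of $C^1$ functions, norms are equivalent, so the $L^{2p}$-closeness of the gradients gives uniform smallness of the error via Hölder. Combining the three estimates and choosing $r$ small yields the statement with $c'=c/2$. The case of $X^-$ follows symmetrically by exchanging the roles of $u$ and $v$.
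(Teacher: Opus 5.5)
Your proposal leaves the decisive step unproved, and the route you sketch cannot supply it.

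\textbf{The gap.} The obstruction is the principal $\xi$-term on $X^+$ (and, symmetrically, the $\eta$-term on $X^-$). On $E=\{|\nabla u-\nabla\bar u|\ge\delta\}$ you only know $A_u\ge I$; $|E|$ is small, but $\nabla\xi$ may concentrate on $E$. The compact-perturbation property makes the $\eta$-component of elements of $\mathbb H^+$ compact, not the $\xi$-component. So there is no equi-integrability of $|\nabla\xi|^2$ to appeal to. ``Absorbing'' with $A_u\ge I$ alone also fails: $\|\nabla\xi\|_2^2$ need not dominate the negative $\eta$- and $G$-contributions, since positivity at $(\bar u,\bar v)$ uses the full weight $A_{\bar u}$.

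In fact your conclusion $c'=c/2$ is false in general. For $G(x_1,x_2)=Mx_1$ one has $K=0$, $c=1$ and $X^+=W^{1,2p}_0(\Omega)\times\{0\}$. Replace $\bar u$ by the constant $\bar u(x_0)$ on a small ball $B\ni x_0$, using a cutoff on $2B$; this is a small change in $W^{1,2p}$. Then take $\xi\in C^\infty_c(B)$ with $\nabla\xi$ oscillating along $\nabla\bar u(x_0)$. This forces $c'\le 1/\lambda_{\max}(A_{\bar u}(x_0))$, which is $<\tfrac12$ once $|\nabla\bar u(x_0)|$ is large (e.g.\ for $M$ large). So $c'$ must be allowed to depend on the size of $A_{\bar u}$. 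An argument that perturbs $\diff^2 f(\bar u,\bar v)$ and keeps a fixed fraction of $c$ cannot work.

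\textbf{The missing idea} is the compactness dichotomy in the paper's proof.
\begin{enumerate}
\item Argue by contradiction. Take $(u_n,v_n)\to(\bar u,\bar v)$ in $X$ and $(\xi_n,\eta_n)\in X^+$ with $\|(\xi_n,\eta_n)\|_{(\bar u,\bar v)}=1$ and $\liminf_n\diff^2 f(u_n,v_n)[(\xi_n,\eta_n),(\xi_n,\eta_n)]\le 0$. Pass to a weak limit $(\xi_\infty,\eta_\infty)\in\mathbb H^+$.
\item Ioffe's lower semicontinuity theorem (integrand convex in $\nabla\xi$, $\nabla u_n\to\nabla\bar u$ in measure) gives $\liminf_n\int\langle A_{u_n}\nabla\xi_n,\nabla\xi_n\rangle\ge\int\langle A_{\bar u}\nabla\xi_\infty,\nabla\xi_\infty\rangle$. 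The concentrating part you worry about is nonnegative and is simply discarded.
\item The $\eta$- and $G$-terms converge by compactness. Hence the $\liminf$ is at least $c\|(\xi_\infty,\eta_\infty)\|^2_{(\bar u,\bar v)}$, and the weak limit must vanish.
\item Only in this vanishing case is $A_{u_n}\ge I$ enough. The $\eta$- and $G$-terms then tend to $0$, so $\|\nabla\xi_n\|_2\to 0$, contradicting the normalization.
\end{enumerate}

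\textbf{A separate error in your last paragraph.} The $\eta$-components of elements of $X^+$ do not range over the finite-dimensional space $(\{0\}\times W^{1,2q}_0(\Omega))\cap X^+$. The map $(\xi,\eta)\mapsto\eta$ on $\mathbb H^+$ is compact but in general of infinite rank. So norm equivalence is unavailable precisely on the ``compact remainder''. The H\"older estimate of $\int\langle(B_v-B_{\bar v})\nabla\eta,\nabla\eta\rangle$ needs control of $\nabla\eta$ in $L^{2q}$, i.e.\ a regularity-gaining property of the compact remainder. The paper uses this in sequential form: compactness of $\mathrm{pr}_{\mathbb H^+}-\mathrm{pr}_{W^{1,2}_0(\Omega)\times\{0\}}$ gives strong $W^{1,2}$ convergence $\eta_n\to\eta_\infty$, which is then upgraded to $W^{1,2q}$ by elliptic regularity.
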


\vspace{-7mm}

\begin{proof}
We prove the first inequality, the proof of the second one being analogous. Assume by contradiction that there exists a sequence $(u_n,v_n)\subset X$ converging strongly in $X$ to $(\bar u,\bar v)$ and a sequence $(\xi_n,\eta_n)\subset X^+$ with $\|(\xi_n,\eta_n)\|_{(\bar u,\bar v)} =1$ for all $n\in \N$ such that 
\begin{equation}
\liminf_{n\to +\infty} \diff^2 f(u_n,v_n) [(\xi_n,\eta_n),(\xi_n,\eta_n)] \leq 0.
\label{eq:contradiction}
\end{equation}
Without loss of generality we can assume that $(\xi_n,\eta_n)$ converges weakly to $(\xi_\infty,\eta_\infty) \in \mathbb H^+$, and hence in particular strongly in $L^2$. 
Now, we compute 
\begin{align*}
\diff^2 f(u_n,v_n) & [(\xi_n,\eta_n),(\xi_n,\eta_n)] \\
			&= \underbrace{\int_\Omega (1+|\nabla u_n|^2 )^{p-1} |\nabla \xi_n|^2 + 2 (p-1) \int_\Omega (1+|\nabla u_n|^2)^{p-2} \langle \nabla u_n,\nabla \xi_n\rangle^2}_{=:(1)}  \\
			&- \underbrace{\int_\Omega (1+|\nabla v_n|^2 )^{q-1} |\nabla \eta_n|^2 - 2 (q-1) \int_\Omega (1+|\nabla v_n|^2)^{q-2} \langle \nabla v_n,\nabla \eta_n\rangle^2}_{=:(2)} \\ 
			&+ \underbrace{\int_\Omega \Big ( \partial_{11} G(u_n,v_n)\xi_n^2 + 2 \partial_{12} G(u_n,v_n) \xi_n\eta_n + \partial_{22} G(u_n,v_n) \eta_n^2\Big ).}_{=:(3)}
\end{align*}
As far as $(1)$ is concerned, arguing as in \cite[Lemma 2.4]{Asselle:2024} we see that the results in \cite{Ioffe:1977} imply that 
$$\liminf_{n\to +\infty} (1) \geq \int_\Omega (1+|\nabla \bar u|^2)^{p-1} |\nabla \xi_\infty|^2 + 2 (p-1) \int_\Omega (1+|\nabla \bar u|^2)^{p-2} \langle \nabla \bar u, \nabla \xi_\infty\rangle^2,$$
whereas for $(3)$ the embeddings $W^{1,2p}_0(\Omega)\hookrightarrow L^\infty (\Omega),W^{1,2q}_0(\Omega) \hookrightarrow L^\infty(\Omega)$ and the $L^2$-strong convergence of $(\xi_n,\eta_n)$ imply that 
$$(3) \to \int_\Omega \Big ( \partial_{11} G(\bar u ,\bar v)\xi_\infty^2 + 2 \partial_{12} G(\bar u,\bar v) \xi_\infty\eta_\infty + \partial_{22} G(\bar u,\bar v) \eta_\infty^2\Big ).$$
To estimate $(2)$ we first notice that, being $(\xi_n,\eta_n)\subset \mathbb H^+$ bounded and $\mathbb H^+$ a compact perturbation of $W^{1,2}_0(\Omega)\times \{0\}$, we have 
$$(0,\eta_n) = (\text{pr}_{\mathbb H^+}- \text{pr}_{W^{1,2}_0(\Omega)\times \{0\}})(\xi_n,\eta_n) \to (\text{pr}_{\mathbb H^+}- \text{pr}_{W^{1,2}_0(\Omega)\times \{0\}})(\xi_\infty,\eta_\infty) = (0,\eta_\infty)$$
strongly in $W^{1,2}$. Furthermore, since $\dim ((\{0\}\times W^{1,2}_0(\Omega)) \cap \mathbb H^+) <\infty$, this space is actually contained in $\{0\}\times W^{1,2q}_0(\Omega)$ by elliptic regularity, and 
this implies that $(0,\eta_n) \to (0,\eta_\infty)$ strongly in $W^{1,2q}$. A straightforward application of H\"older's inequality now shows that 
$$(2) \to - \int_\Omega (1+|\nabla \bar v|^2 )^{q-1} |\nabla \eta_\infty|^2 - 2 (q-1) \int_\Omega (1+|\nabla \bar v|^2)^{q-2} \langle \nabla \bar v,\nabla \eta_\infty\rangle^2.$$
To see this, we consider the first piece of (2) (the argument being identical for the second one), set $g_n:= (1+|\nabla v_n|^2)$ and $\chi_n := \nabla \eta_n$, observe that $g_n \to \bar g$ in $L^q$ whereas $\chi_n \to \chi_\infty$ in $L^{2q}$, and compute using H\"older's inequality with $\alpha = \frac q{q-1}$ and $\beta=q$
$$\int_\Omega g_n^{q-1} |\eta_n|^2 \leq \left (\int_\Omega g_n^q\right )^{\frac{q-1}q} \left (\int_\Omega |\eta_n|^{2q}\right )^{\frac 1 q} = \|g_n\|_q^{q-1}\|\eta_n\|_{2q}^2.$$
The claim follows now from Lebesgue's dominated convergence theorem. Putting all the pieces together we obtain that 
$$\liminf_{n\to+\infty}\diff^2 f(u_n,v_n)[(\xi_n,\eta_n),(\xi_n,\eta_n)] \geq \langle H_{(\bar u,\bar v)}(\xi_\infty,\eta_\infty), (\xi_\infty,\eta_\infty)\rangle \geq c \|(\xi_\infty,\eta_\infty)\|_{(\bar u,\bar v)}^2$$
and this forces $(\xi_\infty,\eta_\infty)=(0,0)$, as otherwise we would get a contradiction with \eqref{eq:contradiction}. In particular, $\eta_n \to 0$ strongly in $W^{1,2q}$ and $\xi_n\to 0$ strongly in $L^2$. On the other hand 
$$\diff^2 f(u_n,v_n)[(\xi_n,\eta_n),(\xi_n,\eta_n)] \geq \|\nabla \xi_n\|^2 - (2)- (3) \geq c \|\xi_n\|_{W^{1,2}} - (2)-(3)$$
which is equivalent to 
$$\|\xi_n\|_{W^{1,2}} \lesssim \diff^2 f(u_n,v_n)[(\xi_n,\eta_n),(\xi_n,\eta_n)] + (2)+ (3).$$
From this we deduce that 
$$\liminf_{n\to +\infty} \|\xi_n\|_{W^{1,2}} \lesssim \liminf_{n\to +\infty} \diff^2 f(u_n,v_n)[(\xi_n,\eta_n),(\xi_n,\eta_n)] + (2)+ (3) =0,$$
and hence that there is a subsequence of $(\xi_n)$ converging strongly to $0$ in $W^{1,2}$. This is a contradiction to the  assumption $\|(\xi_n,\eta_n)\|_{(\bar u,\bar v)}=1$ for all $n\in \N$.
\end{proof}

We can now proceed to show that critical points are non-degenerate in the sense of Definition \ref{def:nondegintro}. The proof is identical to \cite[Proposition 2.3]{Asselle:2024} and reported here just for the sake of completeness. 

\begin{prop}
Let $(\bar u,\bar v) \in X$ be a critical point of $f$ as in \eqref{eq:functional}. Then, there exist a neighborhood $\mathcal U$ of $(\bar u,\bar v)$ in $X$ 
and a linear hyperbolic operator $L:T_{(\bar u,\bar v)} X \to T_{(\bar u,\bar v)} X$ such that, on $\mathcal U$, $f$ is a Lyapunov function for the linear flow defined by $L$.  
\label{prop:hyperbolic}
\end{prop}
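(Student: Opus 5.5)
The plan is to take $L := P^+ - P^-$, where $P^\pm: X\to X^\pm$ are the projections associated with the splitting $X\cong X^-\oplus X^+$ obtained above. Using these projections, $L$ acts as the identity on $X^+$ and as minus the identity on $X^-$. It is a bounded hyperbolic operator on $X$: its spectrum is $\{\pm1\}$, and the linear flow is $e^{tL}=e^tP^++e^{-t}P^-$. Here $P^\pm$ are bounded on $X$ because they are restrictions of the $\langle\cdot,\cdot\rangle_{(\bar u,\bar v)}$-orthogonal projections of $\mathbb H_{(\bar u,\bar v)}$. These restrictions preserve $X$, since $X^+\oplus X^-= X$ is a topological direct sum: both summands are closed in $X$ (they are intersections of $X$ with closed subspaces of $\mathbb H$ under a continuous embedding), and we then apply the closed graph/open mapping theorem.

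Next I will show that $\diff f(u,v)[L(u-\bar u, v-\bar v)]>0$ for all $(u,v)\in\mathcal U\setminus\{(\bar u,\bar v)\}$, with $\mathcal U=B^X_r(\bar u,\bar v)$ and $r$ from Lemma \ref{lem:splittingpersistence}. This inequality is precisely the statement that $f$ is strictly increasing along nonconstant orbits of the flow $e^{tL}$ (translated to $(\bar u,\bar v)$) while they stay in $\mathcal U$, i.e. that $f$ is a Lyapunov function. Write $w=(u,v)-(\bar u,\bar v)=w^++w^-$. Since $\diff f(\bar u,\bar v)=0$, the mean value theorem gives
\[
\diff f(\bar u+w)[Lw]=\int_0^1 \diff^2 f(\bar u+sw)[w,\,w^+-w^-]\,\diff s .
\]
Expanding the integrand bilinearly yields $\diff^2f[w^+,w^+]-\diff^2f[w^-,w^-]$, and the cross terms cancel by symmetry of $\diff^2 f$, because $\diff^2f[w^-,w^+]-\diff^2f[w^+,w^-]=0$. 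Each point $\bar u+sw$ lies in $B_r^X(\bar u,\bar v)$, so Lemma \ref{lem:splittingpersistence} bounds the integrand from below by $c'(\|w^+\|^2_{(\bar u,\bar v)}+\|w^-\|^2_{(\bar u,\bar v)})$. This is strictly positive for $w\neq0$, since the $\mathbb H$-norm is a norm on $X$ (the embedding $X\hookrightarrow\mathbb H_{(\bar u,\bar v)}$ is injective).

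The main obstacle is the functional-analytic point that $P^\pm$ are bounded on $X$, i.e. that $X^-\oplus X^+$ is really a topological splitting of $X$ and not merely of $\mathbb H$. Everything else is a routine consequence of Lemma \ref{lem:splittingpersistence} and the symmetry of $\diff^2f$. I would justify this as in \cite{Asselle:2024}: use the explicit form $H=(\mathrm{id},-\mathrm{id})+K$ and elliptic regularity, which show that $\mathbb H^{\pm}$-projections of elements of $X$ remain in $X$. Since $\mathbb H^-$ differs from $\{0\}\times W^{1,2}_0$ by a compact perturbation, it suffices to check that the projection onto $\{0\}\times W^{1,2}_0$ maps $X$ to $X$, which is trivial. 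The correction term is handled by a regularity argument: $K$ gains regularity, mapping into $X$ via the $L^\infty$ coefficients $\partial_{ij}G(\bar u,\bar v)$ and $W^{1,2p}$-estimates for the linearized operators. The spectral projections can be written as resolvent integrals of $H$, and this regularity gain is then transferred to them.
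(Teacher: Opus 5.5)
Your argument is the paper's: the same $L$ built from the splitting $X=X^-\oplus X^+$, the same identity $\mathrm{d}f((\bar u,\bar v)+w)[Lw]=\int_0^1\mathrm{d}^2f((\bar u,\bar v)+sw)[w,Lw]\,\mathrm{d}s$ with the cross terms cancelling by symmetry, and the same use of Lemma~\ref{lem:splittingpersistence} on the ball; your sketch of why $P^\pm$ are bounded on $X$ (resolvent integrals plus elliptic bootstrapping through $K$) supplies a detail that the paper simply takes for granted in Section~\ref{sec:2}. The one correction is the sign: in the paper's convention a Lyapunov function decreases along the flow, i.e. $\mathrm{d}f((\bar u,\bar v)+x)[Lx]<0$, and this matters later because the gradient-like field $W$ must coincide with $L$ near critical points with $X^-$ as the unstable direction. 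So you should take $L=P^--P^+$, for which your computation gives the bound $-c'\|w\|^2_{(\bar u,\bar v)}$ verbatim.
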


\begin{proof}
On 
$$T_{(\bar u,\bar v)} X \cong X = X^- \oplus X^+$$
we define the hyperbolic operator $L=(\text{id}, -\text{id})$, that is 
$$L x := L(x_- + x_+) := x_- - x_+, \quad \forall x = x_- + x_+ \in X.$$
We claim
that there exists a neighborhood of $(\bar u,\bar v)$ on which $f$ is a Lyapunov function for the linear flow defined by $L$,
which is equivalent to say that there exists $\delta>0$ such that
$$\mathrm{d} f((\bar u,\bar v) + x) [Lx] < 0, \quad \forall x \in B_	\delta^X(0) \setminus \{0\}.$$ 
We have 
$$\mathrm d f((\bar u,\bar v) + x)[\cdot] =\int_0^1 \frac{\mathrm d}{\mathrm d s} \big (\mathrm d f((\bar u,\bar v) + sx)\big )[\cdot]\, \diff s = \int_0^1 \mathrm d^2 f((\bar u,\bar v) + sx)[\cdot,x]\, \mathrm d s.$$
We choose $r,c'>0$ as in the statement of Lemma \ref{lem:splittingpersistence} and compute for $x\in X$ with $\|x\|<r$: 
\begin{align*}
\mathrm d &f((\bar u,\bar v) + x)[Lx] \\ 
   &= \int_0^1 \mathrm d^2 f((\bar u,\bar v) + sx)[Lx,x]\, \mathrm d s\\ 
			    &= \int_0^1 \mathrm d^2 f((\bar u,\bar v) + sx)[x_- - x_+, x_- + x_+] \, \mathrm d s\\
			    &= \int_0^1\Big ( \mathrm d^2 f((\bar u,\bar v) +sx)[x_-, x_-] -  \mathrm d^2 f((\bar u,\bar v) +sx)[x_+,x_+ ] \\
			    & \leq - c' \|x_-\|_{(\bar u,\bar v)}^2 - c' \|x_+\|_{(\bar u,\bar v)}^2\\ 
			    & \leq  -  c' \|x\|_{(\bar u,\bar v)}^2,
\end{align*}
thus proving the claim.
\end{proof}

\begin{rmk}
Adapting \cite{Cingolani:2007} to the setting of the present paper, we see that, for a generic choice of $G$ satisfying \eqref{eq:growthG}, every critical point $(\bar u,\bar v)$ of $f$ as in \eqref{eq:functional} is such that $\mathrm d^2f (\bar u,\bar v):X\to X^*$ is injective. Proposition \ref{prop:hyperbolic} therefore implies that $f$ is a Morse function in the sense of Definition \ref{def:nondegintro} for a generic choice of $G$. 
\end{rmk}


\section{The Palais-Smale condition, essentially vertical sets, and precompactness}
\label{sec:3}

Having established that $f$ as in \eqref{eq:functional} is generically a Morse function, we can proceed to the construction of Morse homology. To that purpose, 
let us first recall the fundamental steps of the construction in the case of finite Morse indices (see \cite{Asselle:2025,Asselle:2024} for the details). Using $C^2$-smooth partitions of unity 
(which always exist in the cases considered in the present paper, see \cite{Fry:2002}) and non-degeneracy of critical points,
one constructs a gradient-like vector field $V$ for $f$ (which is actually a true pseudo-gradient outside a neighborhood of the critical points) such that the pair $(f,V)$ satisfies the Palais-Smale condition (or the weaker Cerami condition in certain cases). This implies that 
the intersection 
$$W^u(y_0,V)\cap W^s(y_1,V)$$ 
of the stable manifold of the critical point $y_1$ with the unstable manifold of the critical point $y_0$ is contained in a bounded region (c.f. \cite[Proposition 2.5]{Asselle:2025}) and pre-compact, being the Morse index (thus, the dimension of $W^u$) finite. After a generic perturbation of $V$, we can assume that the pair $(f,V)$ satisfies the Morse-Smale condition up to order two, meaning that the intersection between stable and unstable manifolds is transverse whenever the difference of Morse indices is less than or equal to two. Therefore, for any $y_0,y_1 \in \text{crit}(f)$ whose Morse indices differ at most by two, the intersection 
$W^u(y_0,V)\cap W^s(y_1,V)$
is a precompact manifold of dimension equal the difference of the Morse indices. Now, the construction of Morse homology is standard: one defines a chain complex generated by critical points whose boundary operator counts the number (modulo two) of flow lines between pairs of critical points whose Morse indices differ by one (see for instance \cite{Abbondandolo:2006lk}).

To be able to implement such a scheme in the strongly indefinite setting of the present paper we first have to address several questions. We start by showing that functionals as in \eqref{eq:functional} for which the growth condition \eqref{eq:growthG} for the non-linearity holds satisfy the Palais-Smale condition. In what follows, we will need $f$ to satisfy the Palais-Smale condition with respect to a suitably chosen gradient-like vector field. As we shall see, this will follow from the construction of the gradient-like vector field and Lemma \ref{lem:Cerami} below. 

\begin{lem}
Let  $f:X\to \R$ be as in \eqref{eq:functional}, with the non-linearity $G$ satisfying \eqref{eq:growthG} with $\beta(p,q):=\min\{p,q\}$. Then, $f$ satisfies the Palais-Smale condition: every sequence $(u_k,v_k)\subset X$ such that 
$$f(u_k,v_k)\to c, \quad \|\diff f(u_k,v_k)\|_* \to 0$$
admits a converging subsequence.
\label{lem:Cerami}
\end{lem}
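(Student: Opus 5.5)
Assuming that $\alpha_1<\min\{p,q\}$ and $\alpha_2<\min\{p,q\}$, the plan has two parts. First, show that a Palais–Smale sequence is bounded in $X$, testing $\diff f$ separately in the $u$ and $v$ directions. Second, upgrade weak convergence to strong convergence using the monotonicity ((S$_+$)-type property) of the two principal parts.

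\emph{Boundedness.} Let $(u_k,v_k)$ be a Palais–Smale sequence. By \eqref{eq:differentialf},
$$\diff_u f(u_k,v_k)[(u_k,0)] = \int_\Omega (1+|\nabla u_k|^2)^{p-1}|\nabla u_k|^2 - \int_\Omega \partial_1 G(u_k,v_k)u_k,$$
and the left-hand side is bounded by $\|\diff f(u_k,v_k)\|_*\|u_k\|_{W^{1,2p}} = o(\|u_k\|)$. The first integral is bounded below by $\|\nabla u_k\|_{2p}^{2p}-C$. Similarly, $-\diff_v f(u_k,v_k)[(0,v_k)]$ controls $\|\nabla v_k\|_{2q}^{2q}$.

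The difficulty is that \eqref{eq:growthG} bounds $G$, not $\partial_i G$. This is where the energy level must be used. I would control the nonlinear terms by combining these two identities with $f(u_k,v_k)\to c$, or else (and this is the step I would check first) interpret the growth assumption as also giving $|\partial_i G(x)x_i|\lesssim 1+|x_1|^{\alpha_1}+|x_2|^{\alpha_2}$.

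Granting that, the Morrey/Sobolev embedding $W^{1,2p}_0\hookrightarrow L^\infty$ (valid since $p>\tfrac n2$), together with Poincaré, gives $\int|u_k|^{\alpha_1}\lesssim \|\nabla u_k\|_{2p}^{\alpha_1}$ and $\int |v_k|^{\alpha_2}\lesssim \|\nabla v_k\|_{2q}^{\alpha_2}$. Setting $A_k:=\|\nabla u_k\|_{2p}$ and $B_k:=\|\nabla v_k\|_{2q}$, the two tested identities become
$$A_k^{2p}\lesssim 1+A_k+A_k^{\alpha_1}+B_k^{\alpha_2},\qquad B_k^{2q}\lesssim 1+B_k+A_k^{\alpha_1}+B_k^{\alpha_2}.$$
Adding them and using $\alpha_1,\alpha_2<\min\{p,q\}\le \min\{2p,2q\}$, together with $p,q>\tfrac n2\ge 1$ so that the linear terms are subcritical, shows that $A_k^{2p}+B_k^{2q}$ is dominated by strictly lower powers of $A_k$ and $B_k$. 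Hence $A_k$ and $B_k$ are bounded. The cross-coupling, where $B_k^{\alpha_2}$ appears on the $u$-side, is harmless because the inequalities are summed. This is the main obstacle: making the exponent bookkeeping rigorous when only $G$, not $\nabla G$, is controlled. If needed, I would restrict to nonlinearities whose derivatives obey the analogous bound, which is what the genericity statement uses anyway.

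\emph{Strong convergence.} Since $X$ is reflexive, after passing to a subsequence we have $(u_k,v_k)\rightharpoonup(u,v)$ weakly in $X$ and uniformly in $C^0(\overline\Omega)$, by the compactness of $W^{1,2p}_0\hookrightarrow C^0$ for $p>\tfrac n2$. Test with $(u_k-u,0)$. Then $\diff f(u_k,v_k)[(u_k-u,0)]\to 0$, and $\int \partial_1G(u_k,v_k)(u_k-u)\to 0$ by uniform convergence and boundedness. Therefore
$$\int_\Omega \big((1+|\nabla u_k|^2)^{p-1}\nabla u_k-(1+|\nabla u|^2)^{p-1}\nabla u\big)\cdot\nabla(u_k-u)\to 0,$$
where the term involving $u$ alone also tends to $0$ by weak convergence. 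The map $\xi\mapsto(1+|\xi|^2)^{p-1}\xi$ is strongly monotone, with $\langle a(\xi)-a(\zeta),\xi-\zeta\rangle\gtrsim |\xi-\zeta|^{2p}$ (this is the gradient of a uniformly convex function of $2p$-growth). Hence $\nabla u_k\to\nabla u$ in $L^{2p}$. The same argument with $(0,v_k-v)$, where the sign flip is irrelevant, gives $v_k\to v$ in $W^{1,2q}_0$. This yields the convergent subsequence.
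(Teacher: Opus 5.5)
Your skeleton matches the paper's. You test $\diff f(u_k,v_k)$ separately against $(u_k,0)$ and $(0,v_k)$, use $W^{1,2p}_0(\Omega),W^{1,2q}_0(\Omega)\hookrightarrow L^\infty(\Omega)$ and Poincar\'e, add the two inequalities, and then upgrade weak convergence to strong convergence. You are right that \eqref{eq:growthG}, read literally, controls $G$ and not $\partial_iG$. The paper silently uses it as $|\partial_iG(x)|\lesssim 1+|x_1|^{\alpha_1}+|x_2|^{\alpha_2}$, both here and in Claim 2 of Section~3. It never uses $f(u_k,v_k)\to c$, and that route cannot help: in this strongly indefinite setting the level only controls the \emph{difference} of the two principal terms.

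The gap is in the reading you adopt instead, $|\partial_iG(x)\,x_i|\lesssim 1+|x_1|^{\alpha_1}+|x_2|^{\alpha_2}$.
\begin{itemize}
\item It is not the paper's hypothesis and does not follow from it; take $\partial_1G(x)=1+|x_1|^{\alpha_1}$.
\item It is what makes your bookkeeping trivial. You end up using only $\alpha_i<\min\{2p,2q\}$, so the actual restriction $\alpha_i<\min\{p,q\}$ is never really used.
\item Under the paper's reading, which is your own fallback, your displayed inequalities are not what the $u$-test gives. You get $\int|u_k|^{\alpha_1+1}$ and the mixed term $\int|v_k|^{\alpha_2}|u_k|$, i.e.\ $A_k^{\alpha_1+1}$ and $A_kB_k^{\alpha_2}$. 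Symmetrically, the $v$-test gives $B_k^{\alpha_2+1}$ and $A_k^{\alpha_1}B_k$.
\item A mixed product is not disposed of by summing. You need Young's inequality, as the paper uses: $|v|^{\alpha_2}|u|\lesssim|u|^2+|v|^{2\alpha_2}$ and $|u|^{\alpha_1}|v|\lesssim|v|^2+|u|^{2\alpha_1}$.
\end{itemize}
This yields
$$A_k^{2p}+B_k^{2q}\lesssim 1+A_k^{\max\{2,\alpha_1+1,2\alpha_1\}}+B_k^{\max\{2,\alpha_2+1,2\alpha_2\}},$$
and it is precisely here that $p,q>1$ and $\alpha_1<p$, $\alpha_2<q$ are needed. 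With this step inserted, the boundedness argument is complete.

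Your strong-convergence step is correct and takes a genuinely different, more self-contained route. You test with $(u_k-u,0)$ and $(0,v_k-v)$ and use the strong monotonicity of $\xi\mapsto(1+|\xi|^2)^{p-1}\xi$, an $(S_+)$-type argument. The paper instead writes $\diff f(u_k,v_k)$ in terms of the principal operators $D_p$, $D_q$ and the lower-order terms $K_u$, $K_v$. It notes that the latter converge in operator norm along the subsequence, by the compact embedding into $L^\infty$. It then concludes via the continuity of $D_p^{-1}$ and $D_q^{-1}$, quoted from the literature. That continuity itself follows from the monotonicity estimate you use, so your version proves the needed fact directly. The paper's formulation has the advantage of setting up the ``inverse principal part applied to a compact term'' structure it reuses to build the pseudo-gradient $V$ in Section~3.
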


\begin{proof}
Let $(u_k,v_k)\subset X$ be a Palais-Smale sequence. We show that $(u_k,v_k)$ is bounded in $X$. By assumption
\begin{align*}
o(1) & = \frac{1}{\|u_k \|_{W^{1,2p}}} \diff f(u_k,v_k)[(u_k,0)] =  \frac{1}{\|u_k \|_{W^{1,2p}}}\left ( \int_\Omega (1+|\nabla u_k|^2)^{p-1}|\nabla u_k|^2 + \int_\Omega \partial_1 G(u_k,v_k)u_k\right )
	\end{align*}
	and similarly 
	\begin{align*}
o(1) & =  \frac{1}{\|v_k \|_{W^{1,2q}}}\diff f(u_k,v_k)[(0,v_k)] =  \frac{1}{\|v_k \|_{W^{1,2q}}}\left (- \int_\Omega (1+|\nabla v_k|^2)^{q-1}|\nabla v_k|^2 + \int_\Omega \partial_2 G(u_k,v_k)v_k\right ).
	\end{align*}
	The growth condition \eqref{eq:growthG} implies that 
	\begin{align*}
	|\partial_1 G(u_k,v_k)u_k| &\lesssim (1+|u_k|^{\alpha_1} + |v_k|^{\alpha_2}) |u_k| \\
			&\lesssim |u_k| + |u_k|^{\alpha_1+1} + |v_k|^{\alpha_2}|u_k|\\
			&\lesssim |u_k|^{\alpha_1+1} + |u_k|^2 + |v_k|^{2\alpha_2}
			\end{align*}
			and similarly
			\begin{align*}
	|\partial_2 G(u_k,v_k)v_k| &\lesssim (1+|u_k|^{\alpha_1} + |v_k|^{\alpha_2}) |v_k| \\
			&\lesssim |v_k| + |u_k|^{\alpha_1}|v_k| + |v_k|^{\alpha_2+1}\\
			&\lesssim |u_k|^{2\alpha_1} + |v_k|^2+ |v_k|^{\alpha_2+1}.
			\end{align*} 
			Putting all the estimates together we see that 
			\begin{align*}
			\|\nabla u_k\|_{2p}^{2p} & \lesssim \|u_k\|_{W^{1,2p}} + \|u_k\|_{{\alpha_1+1}}^{\alpha_1+1} +\|u_k\|_2^2 +  \|v_k\|_{{2\alpha_2}}^{2\alpha_2},\\
			\|\nabla v_k\|_{2q}^{2q} & \lesssim \|v_k\|_{W^{1,2q}} + \|u_k\|_{{2\alpha_1}}^{2\alpha_1} +\|v_k\|_2^2 +  \|v_k\|_{{\alpha_2+1}}^{\alpha_2+1}.
			\end{align*}
Summing up the two inequalities and using Sobolev's embedding theorem and Poincar\'e inequality yield
\begin{equation}
\|\nabla u_k\|_{2p}^{2p}+\|\nabla v_k\|_{2q}^{2q} \lesssim \|\nabla u_k\|_{2p}^{\max \{2, \alpha_1+1,2\alpha_1\}}+ \|\nabla v_k\|^{\max \{2, \alpha_2+1,2\alpha_2\}}
\label{eq:boundednessPS}
\end{equation}
which implies the boundedness of $(u_k,v_k)$ because by assumption 
$$\max \{2, \alpha_1+1,2\alpha_1\} < 2p, \quad \max \{2, \alpha_2+1,2\alpha_2\}< 2q.$$ 

Once we know that Palais-Smale sequences are bounded, the rest of the proof follows exactly as in the case of finite Morse index. We report the argument for completeness. Since $(u_k,v_k)$ is bounded, up to a subsequence we can assume $(u_k,v_k) \rightharpoonup (u_\infty,v_\infty)$ for some $(u_\infty,v_\infty)\in X$, hence in particular $(u_k,v_k)\to (u_\infty,v_\infty)$ in $L^\infty$. Clearly, we can write componentwise
$$\diff f (u_k,v_k)= (\diff_u f(u_k,v_k) ,\diff_v f(u_k,v_k)) = (D_p (u_k)+ K_u(u_k,v_k), - D_q(v_k) + K_v(u_k,v_k)),$$ 
where 
\begin{align}
& D_p:W^{1,2p}_0(\Omega) \to ( W^{1,2p}_0(\Omega))^*, \quad D_p (u)[\cdot] := \int_\Omega (1+|\nabla u|^2)^{p -1} \langle \nabla u ,\nabla \cdot \rangle\, \diff x \label{eq:Dp}\\ 
&D_q:W^{1,2q}_0(\Omega) \to ( W^{1,2q}_0(\Omega))^*, \quad D_q (v)[\cdot] := \int_\Omega (1+|\nabla v|^2)^{q -1} \langle \nabla v,\nabla \cdot \rangle\, \diff x \label{eq:Dq}
\end{align}
are invertible non-linear operators with continuous inverses (see \cite[Appendix B]{Benci})and 
\begin{align}
& K_u:X \to (W^{1,2p}_0(\Omega))^*, \quad K_u (u,v) [\cdot] := \int_\Omega \partial_1 G(u,v) \cdot \, \diff x \label{eq:Ku}\\
& K_v:X \to (W^{1,2q}_0(\Omega))^*, \quad K_v (u,v) [\cdot] := \int_\Omega \partial_2 G(u,v) \cdot \, \diff x. \label{eq:Kv}
\end{align} 
We readily see that $K_u(u_k,v_k) \to K_u(u_\infty,v_\infty)$ resp. $K_v(u_k,v_k)\to K_v(u_\infty,v_\infty)$ in operator norm. Consequently, the fact that 
$$o(1) = \|\diff f(u_k,v_k)\| = \|(D_p(u_k),D_q(v_k)) + (K_u(u_k,v_k),K_v(u_k,v_k))\|$$ 
implies that $D_p(u_k) \to K_u(u_\infty,v_\infty)$ and $D_q (v_k) \to K_v(u_\infty,v_\infty)$, and hence finally $u_k \to D_p^{-1}(K_u(u_\infty,v_\infty))$ resp. $v_k\to D_q^{-1}(K_v(u_\infty,v_\infty))$ by continuity of $D_p^{-1}$ and $D_q^{-1}$. 
\end{proof}

\begin{rmk}
The estimates in the proof of Lemma \ref{lem:Cerami} are by no means optimal. Indeed, the Palais-Smale condition continues to hold even after replacing $\alpha_1,\alpha_2<\min\{p,q\}$ with the weaker requirement that $\alpha_1<\beta(p)$, $\alpha_2<\beta(q)$, for suitable $\beta(p)>p$ and $\beta(q)>q$. To see this, we apply Young's inequality with $s_j\geq 2$, $j=1,2$, and $\frac{1}{r_j}+\frac{1}{s_j}=1$, to obtain
$$|u_k|^{\alpha_1}|v_k|\lesssim |u_k|^{r_1 \alpha_1}+ |v_k|^{s_1}, \quad |v_k|^{\alpha_2}|u_k| \lesssim |v_k|^{r_2\alpha_2} + |u_k|^{s_2}.$$ 
Therefore, the exponents in \eqref{eq:boundednessPS} can be replaced by 
$$\max \{s_2, \alpha_1 +1, r_1\alpha_1\}, \quad \max \{s_1, \alpha_2+1, r_2\alpha_2\},$$
and the condition for the boundedness of Palais-Smale sequences becomes 
$$\max \{s_2, \alpha_1+1,r_1\alpha_1\} < 2p, \quad \max \{s_1, \alpha_2+1,r_2\alpha_2\}< 2q.$$ 
Choosing $s_2=2p-\epsilon$, $s_1=2q-\epsilon$ yield 
$$r_1\alpha_1= \frac{2q-\epsilon}{2q-\epsilon-1} \alpha_1 < 2p, \quad r_2\alpha_2 = \frac{2p-\epsilon}{2p-\epsilon-1}\alpha_2 < 2q,$$
which are clearly satisfied for $\alpha_1=p$ and $\alpha_2 =q$ provided $\epsilon>0$ is small enough. Since obviously 
$p+1<2p$ and $q+1<2q$, we obtain that the Palais-Smale condition still holds for $\alpha_1 =p$ and $\alpha_2=q$ (and also for larger exponents). For later purposes, see Claim 2 below, it will be important that $\alpha_1,\alpha_2\leq \min \{2p-1,2q-1\}$. Therefore, we see that linear growth conditions for $G$ are allowed provided 
$$p\leq q <  2p-1 \qquad \text{or}\qquad  q\leq p < 2q-1.$$
Here we do not attempt to find the optimal values of $\beta(p)$ and $\beta(q)$ ensuring both the Palais-Smale condition and Claim 2 below, and leave this refinement to the interested reader. 
\label{rmk:linear}
\end{rmk}

\begin{rmk}
We shall stress the fact that Inequality \eqref{eq:boundednessPS} holds up to a constant which depends on $p,q,\alpha_1,\alpha_2$, and $\Omega$ but is independent of the specific choice of the Palais-Smale sequence $(u_k,v_k)$. Consequently, there exists $R=R(p,q,\alpha_1,\alpha_2,\Omega)>0$ such that any Palais-Smale sequence for $f$ is contained $B_R^X(0)$. In particular, all critical points of $f$ are contained in $B_R^X(0)$, and hence there exists $\lambda \in \R$ such that all critical points of $f$ are contained in the strip $\{-\lambda \leq f \leq \lambda\}$. 
\label{rmk:emptysublevel}
\end{rmk}

In the strongly indefinite setting of the present paper, the Palais-Smale condition, despite being crucial for the definition of Morse homology, is unfortunately not enough to ensure that the intersection of stable and unstable manifolds of pairs of critical points is finite dimensional and pre-compact. Indeed, for an arbitrary gradient-like vector field $V$, the Palais-Smale condition (or, if needed, the weaker Cerami condition) only implies that such an intersection is contained in a bounded region, but it might a priori be infinite dimensional, and even if finite dimensional, might contain infinitely many flow lines with no cluster points besides the critical points. As in \cite{AM:05,Asselle:2022}, finite dimensionality and pre-compactness are achieved by choosing the gradient-like vector field $V$ within the smaller class of vector fields which are compatible with an additional structure, needed in order to make comparisons. In non-linear settings such as \cite{AM:05,Asselle:2022} such an additional structure is given by a so-called $(0)$-essential subbundle $\mathcal E$ of the tangent bundle and needs to meet several compatibility conditions with $f$ and the (pseudo-)gradient flow. Usually, the choice of $\mathcal E$ is suggested by the problem itself. In the (easier) linear setting of the present paper, $\mathcal E$ is simply given by the constant subbundle of $TX$ given by $\{0\}\times W^{1,2q}_0(\Omega)$. As already proved in the previous section, the first compatibility condition between $\mathcal E$ and $f$ is satisfied; indeed, for every $(\bar u,\bar v) \in \text{crit}(f)$ we have that $X^-$ is a compact perturbation\footnote{The notion of compact perturbation makes sense in a Hilbert setting only. However, we can still say that $X^-$ is a compact perturbation of $\mathcal E$, meaning that the Hilbert extension $\mathbb H^-$ is a compact perturbation of $\{0\}\times W^{1,2}_0(\Omega)$.} of $\mathcal E$. 

\begin{dfn}
A bounded subset $\mathcal A\subset X = W^{1,2p}_0(\Omega) \times W^{1,2q}_0(\Omega)$ is called \textit{essentially vertical}\footnote{The role of the orthogonal projection in \cite{AM:05,Asselle:2022} is here played by the projection onto the first factor.} if $\text{pr}_1 (\mathcal A) \subset W^{1,2p}_0(\Omega) $ is pre-compact. 
Here $\text{pr}_1$ denotes the projection onto the first factor.
\end{dfn}

Notice that the family 
$$\mathcal F := \{\mathcal A \subset X\ |\ \mathcal A \ \text{essentially vertical}\}$$
is an essentially vertical family in the sense of Definition 2.5 in \cite{Asselle:2022}. Indeed, the first two properties are trivially satisfied. Also, $\mathcal F$ is closed with respect to the Hausdorff distance as one can show by proving that, if $\mathcal B \subset X$ is contained in the $\epsilon$-neighborhood of some $\mathcal A_\epsilon \in \mathcal F$ for every $\epsilon >0$, then $\mathcal B$ is bounded and $\text{pr}_1 (\mathcal B)$ is totally bounded (hence precompact). The details are left to the reader. 

\begin{dfn}
Let $W$ be a locally Lipschitz continuous vector field on $X$ whose flow $\varphi_W$ is globally defined. We say that $W$ \textit{preserves essentially vertical sets} if the following holds: for every essentially vertical set $\mathcal A \subset X$ and for every $T\geq 0$ the set 
$$\varphi_W \big ([0,T]\times \mathcal A) \subset X$$ 
is essentially vertical. 
\end{dfn}

\begin{rmk}
Even if in the present paper we will only use vector fields of the form ``compact perturbation of a fixed linear vector field'',
we still find convenient to formulate the next results in terms of vector fields which preserve essentially vertical sets, as this 
makes the connection with the references \cite{AM:05,Asselle:2022} clearer, while at the same time bridging more naturally towards applications in non-linear Banach settings. 
\end{rmk}

The proof of Proposition 4.1 in \cite{Asselle:2022} goes through word by word showing the following.

\begin{prop}
Let $\mathfrak W$ be the space of Lipschitz continuous vector fields on $X$ of class $C^2$, having at most linear growth, and which preserve essentially vertical sets. Then, the following hold: 
\begin{enumerate}
\item $\mathfrak W$ contains all vector fields of the form 
\begin{equation}
W(u,v) = (-u,v) + K(u,v)
\label{eq:compactperturbation}
\end{equation}
with $K:X\to X$ compact, of class $C^2$, and having at most linear growth. 
\item  $\forall B\subset X$ bounded, $\mathfrak W$ is a module over the ring of bounded Lipschitz function of class $C^2$ on $B$.
\end{enumerate}
\label{prop:modulestructure}
\end{prop}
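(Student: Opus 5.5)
Both items reduce to comparing the flow of a general vector field with the explicit linear flow $e^{tL}$, where $L(u,v)=(-u,v)$. The key point is that $e^{tL}(u,v)=(e^{-t}u,e^{t}v)$ acts diagonally on the product $X=W^{1,2p}_0(\Omega)\times W^{1,2q}_0(\Omega)$. Hence $\mathrm{pr}_1 e^{tL}=e^{-t}\mathrm{pr}_1$. So $e^{tL}$, and more generally $\{e^{sL}\}_{s\in[0,T]}$ applied to a set, sends essentially vertical sets to essentially vertical sets. Indeed, $\{e^{-s}x : s\in[0,T],\, x\in \mathrm{pr}_1(\mathcal A)\}$ is the continuous image of $[0,T]\times\overline{\mathrm{pr}_1(\mathcal A)}$, which is compact. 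Boundedness is clear since $\|e^{sL}\|\le e^{T}$.

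\emph{Item (1).} Let $W=L+K$ with $K$ compact, $C^2$, and of at most linear growth. $W$ is locally Lipschitz: for the global Lipschitz requirement I would either use the standing assumption on $K$ or note that the definition of $\mathfrak W$ only requires what is needed for a global flow. It also has linear growth, so by Gronwall the flow $\varphi_W$ is global. Moreover $\varphi_W([0,T]\times\mathcal A)$ is bounded whenever $\mathcal A$ is bounded, since $\|\varphi_W^t(x)\|\le (\|x\|+CT)e^{CT}$. Then I use variation of constants:
$$\varphi_W^t(x)=e^{tL}x+\int_0^t e^{(t-s)L}K(\varphi_W^s(x))\,\diff s.$$
The first term is handled by the observation above. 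For the second, the set $B:=\varphi_W([0,T]\times\mathcal A)$ is bounded, so $K(B)$ is precompact. The set $\{e^{\tau L}y : \tau\in[0,T],\, y\in\overline{K(B)}\}$ is then compact, and the integral lies in $t$ times the closed convex hull of this set, which is compact by Mazur's theorem. Taking a union over $t\in[0,T]$ again gives a precompact set, via the continuous image of $[0,T]\times(\text{compact})$ under scalar multiplication. Applying $\mathrm{pr}_1$ and using the fact that the sum of two precompact sets is precompact shows that $\mathrm{pr}_1(B)$ is precompact. This gives item (1).

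\emph{Item (2).} Closure under addition and under multiplication by bounded Lipschitz $C^2$ functions $\rho$ (on bounded sets) follows the same scheme as Proposition 4.1 in \cite{Asselle:2022}. Linear growth, Lipschitz continuity and $C^2$ regularity are preserved, at least on bounded sets, which is all that is needed. The issue is preservation of essentially vertical sets. Here I would use a characterization of such $W$ by an infinitesimal condition in the spirit of \cite{AM:05,Asselle:2022}: $W$ preserves essentially vertical sets if and only if for every essentially vertical $\mathcal A$ the set $\mathrm{pr}_1 W(\mathcal A)$ can be written as $\mathrm{pr}_1 M(\mathcal A)$ plus a precompact set, with $M$ a fixed linear operator preserving the splitting. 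Equivalently, I would prove directly a flow-comparison (Gronwall-type) argument in a measure of noncompactness $\gamma$ on $W^{1,2p}_0$. Set $\gamma(t):=\gamma\big(\mathrm{pr}_1\varphi^{[0,t]}(\mathcal A)\big)$. Using the integral equation and subadditivity of $\gamma$, one gets $\gamma(t)\le \gamma(0)+C\int_0^t\gamma(s)\,\diff s$, hence $\gamma\equiv0$.

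The main obstacle is to obtain such a differential inequality for $\rho W$ and $W_1+W_2$ from merely knowing that each $W_i$ preserves essentially vertical sets. I would handle it as in \cite{Asselle:2022}: first show that preservation is equivalent to the condition that $\mathrm{pr}_1 W$ maps essentially vertical sets to sets with $\gamma$-control $\gamma(\mathrm{pr}_1W(\mathcal A))\le C\gamma(\mathrm{pr}_1\mathcal A)$. This condition is obviously stable under sums, and under multiplication by $\rho$, because $\rho(\mathcal A)$ is a bounded interval of scalars and multiplication by a compact set of scalars preserves $\gamma$-bounds up to a constant. Then I would conclude via the Gronwall estimate above.
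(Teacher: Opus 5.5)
\textbf{Item (1).} This part is fine. Variation of constants, compactness of $K$ on the bounded set $\varphi_W([0,T]\times\mathcal A)$, and Mazur's theorem give exactly what is needed. Your hedge about global Lipschitz continuity points to a looseness in the statement itself: a compact $C^2$ map with linear growth is only locally Lipschitz, and that is enough for the global flow.

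\textbf{Item (2): the gap.} The gap sits exactly at the step you flag as the main obstacle. You impose the $\gamma$-bound only on essentially vertical $\mathcal A$. But then $\gamma(\mathrm{pr}_1\mathcal A)=0$, and in your other version $\mathrm{pr}_1 M(\mathcal A)$ is precompact. So the condition only says that $\mathrm{pr}_1W(\mathcal A)$ is precompact.

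Your Gronwall step, however, applies the bound to $\varphi([0,s]\times\mathcal A)$. That set is not yet known to be essentially vertical; this is what you are trying to prove. What you actually need is $\gamma(\mathrm{pr}_1 W(\mathcal B))\le C\,\gamma(\mathrm{pr}_1\mathcal B)$ for \emph{every} bounded $\mathcal B$. You must derive this for $W_1,W_2$ from the sole knowledge that their flows preserve essentially vertical sets. Saying you would ``first show that preservation is equivalent'' leaves out the whole content of item (2). Neither direction is automatic, and your sketch never uses the Lipschitz hypothesis built into $\mathfrak W$, which is what makes the argument work.

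\textbf{How to close it.} Let $\ell$ be a Lipschitz constant of $W$ on a bounded set containing everything in sight.

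(a) Suppose $W$ preserves essentially vertical sets and $\|W\|\le M$ on $\varphi([0,h]\times\mathcal A)$. Then $\|\varphi^h(x)-x-hW(x)\|\le\tfrac12\ell Mh^2$. Hence $\mathrm{pr}_1W(\mathcal A)$ lies within $\tfrac12\ell Mh$ of the precompact set $h^{-1}(\mathrm{pr}_1\varphi^h(\mathcal A)-\mathrm{pr}_1\mathcal A)$ for every $h>0$, so it is precompact.

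(b) Conversely, suppose $\mathrm{pr}_1W$ maps essentially vertical sets to precompact sets, and let $\gamma$ be the Hausdorff measure of noncompactness. Cover $\mathrm{pr}_1\mathcal B$ by finitely many balls $B(p_i,r)$ with $r>\gamma(\mathrm{pr}_1\mathcal B)$. Use $\|W(u,v)-W(p_i,v)\|\le\ell r$, together with the fact that $\{p_i\}\times\mathrm{pr}_2\mathcal B$ is essentially vertical. This gives $\gamma(\mathrm{pr}_1W(\mathcal B))\le\ell\,\gamma(\mathrm{pr}_1\mathcal B)$.

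(c) For $\tau\le t$ write $\mathrm{pr}_1\varphi^\tau(x)\in\mathrm{pr}_1\mathcal A+\tau\,\overline{\mathrm{conv}}\,\mathrm{pr}_1W(\varphi([0,t]\times\mathcal A))$. Use $\gamma(\overline{\mathrm{conv}}(S\cup\{0\}))=\gamma(S)$; bare subadditivity does not let you pass $\gamma$ through the integral. Setting $\Gamma(t):=\gamma(\mathrm{pr}_1\varphi([0,t]\times\mathcal A))$, this yields $\Gamma(t)\le\gamma(\mathrm{pr}_1\mathcal A)+\ell t\,\Gamma(t)=\ell t\,\Gamma(t)$. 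Hence $\Gamma=0$ on $[0,1/(2\ell)]$, and you iterate.

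The property ``$\mathrm{pr}_1W$ maps essentially vertical sets to precompact sets'' is trivially stable under sums and under multiplication by a bounded $\rho$. Lipschitz continuity on bounded sets is also preserved. So (a)--(c) prove item (2).

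\textbf{Comparison with the paper.} With this repair your argument is a genuinely different route. The paper gives no proof of its own and defers word by word to Proposition 4.1 of \cite{Asselle:2022}. That framework relies on the axioms of an essentially vertical family, notably closedness under Hausdorff limits, which the paper invokes again in Claim 5. Your version yields a self-contained infinitesimal criterion: a vector field that is Lipschitz on bounded sets and has linear growth preserves essentially vertical sets if and only if $\mathrm{pr}_1W$ maps them to precompact sets. The module structure follows immediately from this criterion.
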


We proceed now to construct a $C^2$-smooth gradient-like vector field $W$ for $f$ as in \eqref{eq:functional} which preserves essentially vertical sets. We shall stress the fact that $W$ will have 
the form \eqref{eq:compactperturbation}. To do that, we first 
define for every $(u,v) \in X$
\begin{equation}
 V(u,v) := (-u - D_p^{-1} K_u (u,v), v - D_q^{-1}K_v(u,v)),
\label{eq:tildeV}
\end{equation}
where $D_p,D_q,K_u,K_v$ are  as in \eqref{eq:Dp}--\eqref{eq:Kv} respectively. Notice that $V$, despite being obviously of the form \eqref{eq:compactperturbation}, it is a priori only continuous, hence need not even have a well-defined flow. Also, it is not immediately clear why $V$ should be gradient-like for $f$. 

\begin{rmk}
The operators $D_p$ and $D_q$ can be thought of as the natural (non-linear) replacements of the Riesz' isomorphism in the Banach setting of the present paper. 
It is worth noticing that, if $D_p$ and $D_q$ were actually linear, then we would have 
\begin{align*}
(-u - D_p^{-1} K_u (u,v) , v - D_q^{-1}K_v(u,v)) & = (- D_p^{-1}(D_p(u) + K_u(u,v)), - D_q^{-1}(-D_q(v) + K_v(u,v))) \\
 &= (-D_p^{-1}(\mathrm d_uf(u,v)), -D_q^{-1}(\mathrm d_vf(u,v)))\\
 &= -D^{-1}(\mathrm df(u,v)),
 \end{align*}
 that is, $V$ would agree with the natural gradient-like vector field $V_{\text{nat}}:=-D^{-1}(\mathrm d f)$ for $f$. Clearly, this is not the case unless $p=q=1$, which is however excluded in the present paper. The choice of $V$ instead of $V_{\text{nat}}$ is motivated by the fact that $V_{\text{nat}}$ need not preserve essentially vertical sets. 
\end{rmk}

\textbf{Claim 1.} $V$ is a gradient-like vector field for $f$. 

\vspace{2mm}

\noindent To this purpose, let us compute 
\begin{align*}
    \mathrm d f(u,v) [V(u,v)] &= \mathrm d_u f(u,v)[-u-D_p^{-1}K_u(u,v)] + \mathrm d_v f(u,v)[v-D_q^{-1}K_v(u,v)]\\ 
    						 &= (D_p(u) + K_u(u,v))[-u-D_p^{-1}K_u(u,v)] + (-D_q(v) +K_v(u,v))[v-D_q^{-1}K_v(u,v)]\\
						 &= - (D_p(u) - D_p (-D_p^{-1}K_u(u,v))) [u - (- D_p^{-1}K_u(u,v))]\\
						 &\ \ \ \, -  (D_q(v) - D_q (D_q^{-1}K_v(u,v))) [v - D_q^{-1}K_v(u,v)]\\
						 &= - (D_p(u) - D_p(\xi))[u-\xi] - (D_q(v)-D_q(\mu))[v-\mu],
\end{align*}
where $\xi := -D_p^{-1}K_u(u,v)$ and $\mu:= D_q^{-1}K_v(u,v)$. The claim follows as $D_p$ and $D_q$ are strictly convex:
$$\mathrm d f(u,v) [\tilde V(u,v)] \leq 0,\quad \forall (u,v),$$ 
with equality iff $u=\xi$ and $v=\mu$, that is, iff
\begin{align*}
\mathrm d_u f(u,v) &= D_p(u)+ K_u(u,v) = D_p (\xi) + K_u(u,v) = D_p (-D_p^{-1}K_u(u,v)) + K_u(u,v)) = 0,\\
\mathrm d_v f(u,v) &= -D_q(v)+ K_v(u,v) = -D_q (\mu) + K_v(u,v) = -D_q (D_q^{-1}K_v(u,v)) + K_v(u,v)) = 0,
\end{align*}
namely iff $(u,v)\in \text{crit}(f)$. For later use, we also observe the following stronger estimates, which can be found for instance in \cite[Chapter 9]{Brezis} or \cite[Section 24]{Zeidler}: 
\begin{align}
(D_p(u)-D_p(\xi))[u-\xi] &\geq 4c \|u-\xi\|_{W^{1,2p}}^{2p}, \label{eq:primastima}\\
(D_p(u)-D_p(\xi))[u-\xi] &\geq 4c \|D_p(u)-D_p(\xi)\|_*^{p'}, \label{eq:secondastima}
\end{align}
for some $c>0$. Here $p':= \frac{2p}{2p-1}$ is the conjugate exponent of $2p$. Analogous estimates hold for $D_q$ as well. 

\vspace{2mm}

\textbf{Claim 2.} $V$ has linear growth. 

\vspace{2mm}

\noindent First observe that 
$$V(u,v) = (-u,v) - (D_p^{-1}K_u(u,v), D_q^{-1}K_v(u,v)),$$
and hence 
$$\|V(u,v)\| \leq \|(u,v)\| + \| (D_p^{-1}K_u(u,v), D_q^{-1}K_v(u,v))\|.$$
Therefore, all we have to show is that the last term on the rhs grows at most linearly. Without loss of generality we consider only the term $D_p^{-1}K_u(u,v)$ (being the estimate for the other term identical) and,
for $\xi \in W^{1,2p}_0(\Omega)$ with $\|\xi \|_{W^{1,2p}}=1$, estimate using \eqref{eq:growthG} and the embedding $W^{1,2p}_0(\Omega) \hookrightarrow L^\infty(\Omega)$: 
\begin{align*}
 \big |K_u(u,v)[\xi]\big | &= \left |\int_\Omega \partial_1 G(u,v) \xi \right | \\
 			&\leq \int_\Omega \big |\partial_1 G(u,v)| |\xi|\\
			&\leq \|\xi\|_\infty \int_\Omega \big |\partial_1 G(u,v)|\\ 
			&\lesssim \|\xi\|_{W^{1,2p}} \big ( |\Omega| + \|u\|_{\alpha_1}^{\alpha_1} + \|v\|_{\alpha_2}^{\alpha_2}\big ).
\end{align*}
This implies that 
\begin{equation}
\|K_u(u,v)\|_* \lesssim 1 + \|u\|_{\alpha_1}^{\alpha_1} + \|v\|_{\alpha_2}^{\alpha_2}.
\label{eq:boundKu}
\end{equation}
Combining \eqref{eq:boundKu} with the standard estimate 
$$\|D_p^{-1} \varphi \|_{W^{1,2p}} \lesssim 1 + \|\varphi\|_*^{\frac{1}{2p-1}}, \quad \forall \varphi \in (W^{1,2p}_0(\Omega))^*,$$
we obtain 
\begin{align*}
\|D_p^{-1}K_u(u,v) \|_{W^{1,2p}} & \lesssim 1 + \|K_u(u,v)\|_*^{\frac{1}{2p-1}} \\ 
& \lesssim 1 + \big (1 + \|u\|_{\alpha_1}^{\alpha_1} + \|v\|_{\alpha_2}^{\alpha_2}\big)^{\frac{1}{2p-1}}\\
& \lesssim 1 + \|u\|_{\alpha_1}^{\frac{\alpha_1}{2p-1}} + \|v\|_{\alpha_2}^{\frac{\alpha_2}{2p-1}}\\
&\lesssim 1 + \|(u,v)\|^\gamma 
\end{align*}
with $\gamma:={\frac{\max \{\alpha_1,\alpha_2\}}{2p-1}}<1$ by Assumption \eqref{eq:growthG}. The claim follows. 

\vspace{2mm}

\textbf{Claim 3.} $V$ can be made to a $C^2$-smooth gradient-like vector field $W$ having linear growth so that $W$, in a neighborhood of any critical point $(\bar u,\bar v)$, coincides with the linear vector field induced by the hyperbolic operator $L_{(\bar u,\bar v)}$ given in Proposition \ref{prop:hyperbolic}. 

\vspace{2mm}

For each critical point  $(\bar u,\bar v)$ of $f$ consider $r=r(\bar u, \bar v)>0$ such that  $f$ is a Lyapounov function on $B_r(\bar u,\bar v)$  for the linear flow induced by the linear hyperbolic operator $L_{(\bar u, \bar v)}$ given in Proposition \ref{prop:hyperbolic}. On $B_r(\bar u,\bar v)$ we therefore set $W_{(\bar u,\bar v)}(u,v) := L_{(\bar u,\bar v)} x,$ where $x:= (u-\bar u,v-\bar v)$. 
Denote by $U$ the union of all $B_r(\bar u,\bar v)$, $(\bar u, \bar v) \in \text{crit}(f)$.
Notice now that \eqref{eq:secondastima} implies that 
\begin{equation}
\mathrm d f(u,v) [V(u,v)] \leq- 4c \big (\|\mathrm d_u f(u,v)\|_*^{p'} + \|\mathrm d_v f(u,v) \|_*^{q'}\big ), \quad \forall X\setminus U, \label{eq:generalizedpg}
\end{equation}
where $p',q'<2$ denote the conjugate exponents to $2p$ and $2q$ respectively. 
By \eqref{eq:generalizedpg} and the continuity of $\mathrm df$ we find, for each fixed $(u_0,v_0)\in X\setminus U$, $r_0=r_0(u_0,v_0)>0$ such that 
$$\mathrm d f(u,v) [V(u_0,v_0)] \leq - 2c \big (\|\mathrm d_u f(u_0,v_0)\|_*^{p'} + \|\mathrm d_v f(u_0,v_0) \|_*^{q'}\big),\quad \forall (u,v) \in B_{r_0}(u_0,v_0),$$
and hence, up to shrinking $r_0>0$ further if necessary, 
\begin{equation}
\mathrm d f(u,v) [V(u_0,v_0)] \leq - c \big (\|\mathrm d_u f(u,v)\|_*^{p'} + \|\mathrm d_v f(u,v) \|_*^{q'}\big),\quad \forall (u,v) \in B_{r_0}(u_0,v_0)
\label{eq:estimatefreeze}
\end{equation}
We therefore define $W_{(u_0,v_0)}(u,v):= V(u_0,v_0)$ for all $(u,v) \in B_{r_0}(u_0,v_0)$. 
Without loss of generality we can assume $B_{r_0}{(u_0,v_0)}\cap \frac 12 U =\emptyset$. Now we consider the open covering of $X$ given by
$$\mathfrak U= \mathfrak U_{\text{crit}} \cup \mathfrak U_{\text{nc}} = \Big \{ B_r(\bar u,\bar v) \ \Big | \ (\bar u, \bar v) \in \text{crit}(f)\Big \} \cup \Big \{ B_{r_0}(u_0,v_0) \ \Big |\ (u_0,v_0) \in X\setminus U\Big \}.$$
By the paracompactness of $X$, there exists a locally finite refinement $\mathfrak V =\{\mathcal V_j \ |\ j\in J\}$ of the open covering $\mathfrak U$ which contains $\mathfrak U_{\text{crit}}$. 
Let $\Gamma : J \to X$ be a function such that $\mathcal V_j \subseteq B_{r(\Gamma(j))}(\Gamma(j))$ for all $j\in J$, with equality if $\Gamma(j) \in \text{crit}(f)$. Following \cite{Bonic:1966,Fry:2002}, the Banach space $X$ admits $C^2$-smooth bump functions. Therefore, we can find a $C^2$-smooth partition of unity $\{\chi_j\}_{j\in J}$ subordinated to $\mathfrak V$. By construction we have 
$$\Gamma(j) = (\bar u,\bar v) \in \text{crit}(f) \ \ \Rightarrow \ \ \chi_j \equiv 1 \quad \text{on} \ \frac 12 B_r(\bar u,\bar v).$$
Finally, we set $W:X\to X$ by 
$$W(u,v) := \sum_{j\in J} \chi_j(u,v) W_{\Gamma(j)}(u,v).$$
By construction $W$ satisfies all desired properties.

\vspace{2mm}

\textbf{Claim 4.} The pair $(f,W)$ satisfies the Palais-Smale condition, meaning that every sequence $(u_k,v_k)\subset X$ such that 
$f(u_k,v_k) \to c$ and  $\mathrm df(u_k,v_k)[W(u_k,v_k)] \to 0$
admits a converging subsequence.

\vspace{2mm}

If $(u_k,v_k)$ has a subsequence which converges to a critical point then there is nothing to prove. Therefore, we can assume without loss of generality that $(u_k,v_k) \subset X\setminus U$. 
Using \eqref{eq:estimatefreeze} we obtain that 
\begin{align*}
o(1) &= \mathrm d f(u_k,v_k) [W(u_k,v_k)]\\
	&= \sum_{j\in J} \chi_j(u_k,v_k)  d f(u_k,v_k) [W_{\Gamma(j)} (u_k,v_k)]\\
	&= \sum_{j\in J} \chi_j(u_k,v_k)  d f(u_k,v_k) [V(u_j,v_j)]\\
	&\leq - c \sum_{j \in J} \chi_j(u_k,v_k) \big (\|\mathrm d_u f(u_k,v_k)\|_*^{p'} + \|\mathrm d_v f(u_k,v_k) \|_*^{q'}\big)\\
	&= -c \big (\|\mathrm d_u f(u_k,v_k)\|_*^{p'} + \|\mathrm d_v f(u_k,v_k) \|_*^{q'}\big).
	\end{align*}
From this we deduce that $\|\mathrm d f(u_k,v_k)\|_*\to 0$. The claim follows then from Lemma \ref{lem:Cerami}.

\vspace{2mm}

\textbf{Claim 5.} $W$ preserves essentially vertical sets. 

\vspace{2mm}

Clearly, $W_{\Gamma(j)}$ preserves essentially vertical sets for every $j\in J$, and hence so does every $\chi_j\cdot W_{\Gamma(j)}$ by Proposition \ref{prop:modulestructure}. Using again Proposition \ref{prop:modulestructure} we obtain that 
$$\sum_{j \in J_\ell} \chi_j W_{\Gamma(j)} \in \mathfrak W, \quad \forall J_\ell \subset J \ \text{finite}.$$
Since the partition of unity $\{\chi_j\}$ is locally finite, and since $\mathcal F$ is closed with respect to the Hausdorff distance, an argument analogous to the one in \cite[Section 4]{Asselle:2022}, by passing to the limit $|J_\ell|\to +\infty$ we obtain that $W\in \mathfrak W$. 

\begin{rmk}
Claim 5 can also be shown by observing that $W$ is of the form $(-\text{id},\text{id}) + K$ with $K$ compact  (as every $W_{\Gamma(j)}$ does). The claim follows then from Proposition \ref{prop:modulestructure}.
\end{rmk}

Summarizing, we have proved the following

\begin{prop}
Let $f$ be as in \eqref{eq:functional} with non-linearity $G$ satisfying the growth condition \eqref{eq:growthG}. Then, there exists a $C^2$-smooth vector field $W:X\to X$ of the form 
$$W(u,v) = (-u,v) + K(u,v),\quad \forall (u,v) \in X,$$
such that the following hold: 
\begin{enumerate}
\item $W$ has linear growth, $K:X \to X$ is compact. 
\item $W$ is gradient-like for $f$.
\item $(f,W)$ satisfies the Palais-Smale condition. 
\item Every $(\bar u ,\bar v)\in \mathrm{crit}(f)$ has a neighborhood over which $W$ coincides with the linear vector field given in Proposition \ref{prop:hyperbolic}. 
\item $W$ preserves essentially vertical sets. 
\end{enumerate}
\label{prop:gradientc2}
\end{prop}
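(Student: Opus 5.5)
The proof will collect Claims 1--5 above and check that the vector field $W$ of Claim 3 has the form $(-u,v)+K(u,v)$ with $K$ compact. The plan is as follows.

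\textbf{Structure of $W$.} Write $W=\sum_j\chi_j W_{\Gamma(j)}$ and treat the two kinds of pieces separately.
\begin{itemize}
\item On the pieces $\mathcal V_j\subset B_{r_0}(u_0,v_0)$ with $(u_0,v_0)$ non-critical, $W_{\Gamma(j)}$ is the constant vector $V(u_0,v_0)$. We rewrite it as $(-u,v)+\big[(u-u_0,\,v_0-v)-(D_p^{-1}K_u(u_0,v_0),D_q^{-1}K_v(u_0,v_0))\big]$.
\item On the critical balls, $W_{\Gamma(j)}(u,v)=L_{(\bar u,\bar v)}(u-\bar u,v-\bar v)$, where $L=\mathrm{id}$ on $X^-$ and $-\mathrm{id}$ on $X^+$. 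Since $\mathbb H^-$ is a compact perturbation of $\{0\}\times W^{1,2}_0$, the operator $L-(-\mathrm{id},\mathrm{id})$ is compact on the Hilbert level. The relevant finite-dimensional pieces lie in $X$ by elliptic regularity, so we expect this compactness to transfer to $X$ as well.
\end{itemize}

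\textbf{Main obstacle.} The first decomposition is not a compact remainder globally, since $u-u_0$ is not compact. The cleaner route is to work through Claim 5 and Proposition \ref{prop:modulestructure}. That is, we verify essential verticality and linear growth directly, instead of insisting on a global compact $K$. For the stated form, we define
$$K:=W-(-\mathrm{id},\mathrm{id})=\sum_j\chi_j\big(W_{\Gamma(j)}-(-\mathrm{id},\mathrm{id})\big).$$
On each non-critical ball the summand is $\chi_j$ times (a constant plus $(u,-v)$). We should instead take the frozen field to be $V$ itself, that is $(-u,v)$ plus the compact map $-(D_p^{-1}K_u,D_q^{-1}K_v)$. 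Here $K_u,K_v$ are compact, because $X\hookrightarrow L^\infty$ compactly, and $D_p^{-1},D_q^{-1}$ are continuous. So in the construction of Claim 3 we replace ``freeze the value $V(u_0,v_0)$'' by ``freeze the compact part only''. The gradient-like estimate \eqref{eq:estimatefreeze} survives this change by continuity on small balls, since only the compact term was frozen.

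The remaining regularity issue is $C^2$ smoothness, since the compact part of $V$ is only continuous. We handle it by approximating the compact part locally by constants, which are smooth and have finite rank. A locally finite sum of smooth finite-rank or compact terms, weighted by $\chi_j$, is compact on bounded sets. This is because locally finitely many terms near each point give images that are totally bounded locally, and boundedness of $W$ on bounded sets follows from Claim 2.

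\textbf{Remaining items.} Item (1), linear growth, is Claim 2 transported through the partition of unity: each frozen term is bounded by $1+\|(u_0,v_0)\|^\gamma\lesssim 1+\|(u,v)\|^\gamma$ for small $r_0$. Items (2)--(5) are exactly Claims 1, 4, 3 and 5 respectively.

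I expect the compactness bookkeeping for the infinite locally finite sum to be the delicate step. I would handle it as in Claim 5, using the Hausdorff closedness of $\mathcal F$.
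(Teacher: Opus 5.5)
Your outline is sound. It departs from the paper at one substantive point, and the change is an improvement. The paper's proof is just Claims 1--5, with the whole value $V(u_0,v_0)$ frozen on each non-critical ball. The remark after Claim 5 then asserts that every $W_{\Gamma(j)}$, and hence $W$, is $(-\mathrm{id},\mathrm{id})$ plus a compact map.

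For the constant pieces this is false. Write $\kappa:=-(D_p^{-1}K_u,D_q^{-1}K_v)$. The remainder on $B_{r_0}(u_0,v_0)$ is $(u-u_0,v_0-v)+\kappa(u_0,v_0)$, which maps the ball onto a non-precompact set. Summing over the partition of unity only gives a remainder within $\sup r_0$ of a compact set.

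Freezing only $\kappa$, i.e.\ setting $W_{(u_0,v_0)}(u,v):=(-u,v)+\kappa(u_0,v_0)$, fixes this:
\begin{itemize}
\item The pieces are affine, hence $C^2$.
\item The analogue of \eqref{eq:estimatefreeze} still holds. Write $\mathrm{d}f(x)[W_{x_0}(x)]=\mathrm{d}f(x)[V(x)]+\mathrm{d}f(x)[\kappa(x_0)-\kappa(x)]$. The first term is at most $-4c(\|\mathrm{d}_uf(x)\|_*^{p'}+\|\mathrm{d}_vf(x)\|_*^{q'})$ by \eqref{eq:secondastima}. The second is dominated by it near $x_0$, because $\kappa$ is continuous and $\mathrm{d}f(x_0)\neq 0$.
\item Item (5) follows directly from Proposition \ref{prop:modulestructure}(1), without the Hausdorff-limit argument of Claim 5.
\end{itemize}
Item (2) then comes from this frozen estimate together with the Lyapunov property on the critical balls, not from Claim 1 alone.

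The step that does not work as written is your justification that $K=W-(-\mathrm{id},\mathrm{id})$ is compact. Local finiteness gives nothing uniform on a bounded set in infinite dimensions. Take bumps $\chi_k$ on disjoint balls centred along a bounded, uniformly separated sequence, and multiply by a bounded sequence $(e_k)$ with no convergent subsequence. The sum is locally finite with rank-one terms, yet the image of a bounded set is not precompact. Hausdorff-closedness of $\mathcal F$ concerns essentially vertical sets and does not give compactness of $K$ either.

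What works is compactness of $\kappa$ itself plus convexity. Take all $r_0\le 1$. Since $\chi_j\ge 0$ and $\sum_j\chi_j=1$, for $x$ in a bounded set $B$ the value $K(x)$ is a convex combination of points of
\[
S:=\kappa(N_1(B))\cup\bigcup_{\bar x\in\mathrm{crit}(f)}\big[(L_{\bar x}-(-\mathrm{id},\mathrm{id}))(B)-L_{\bar x}\bar x\big],
\]
where $N_1(B)$ is the $1$-neighbourhood of $B$. The convex hull of the precompact set $S$ is precompact by Mazur's theorem.

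This still needs $L_{\bar x}-(-\mathrm{id},\mathrm{id})=2(P^--P_{\mathcal E})$ to be compact on $X$, which the paper also takes for granted. Here $P^-$ is the projection onto $X^-$ along $X^+$, and $P_{\mathcal E}$ the projection onto $\{0\}\times W^{1,2q}_0(\Omega)$ along the first factor. Your reason, that the finite-dimensional intersections lie in $X$, does not give this, since the Hilbert-level difference of projections has infinite rank in general. One proof goes as follows:
\begin{itemize}
\item Write $H=J+K_0$ with $J=(\mathrm{id},-\mathrm{id})$.
\item $K_0$ is compact on $X$: use the compact embedding $X\hookrightarrow C^0(\overline\Omega)$, then $W^{1,s}$-estimates for the linearized operator, whose coefficients are continuous.
\item For $\lambda$ on a contour $\Gamma$ around the negative spectrum, $\lambda-H$ is Fredholm of index zero and injective on $X$, hence invertible.
\item Then
\[
P^--P_{\mathcal E}=\frac{1}{2\pi i}\oint_\Gamma(\lambda-H)^{-1}K_0(\lambda-J)^{-1}\,\mathrm{d}\lambda
\]
is a norm-convergent integral of compact operators on $X$.
\end{itemize}
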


Adapting the proof of \cite[Theorem 6.5]{AM:05} to the setting of the present paper we obtain the following 

\begin{thm}
Let $f$ be as in \eqref{eq:functional} with $G$ satisfying \eqref{eq:growthG}. Assume that $f$ is Morse and that $V$ is a $C^2$-smooth gradient-like vector field for $f$ whose flow is globally defined and such that: 
\begin{enumerate}
\item[i)] $V(x) = Lx$ in  a neighborhood of every $(\bar u, \bar v) \in \mathrm{crit}(f)$, where $L$ is given by Proposition \ref{prop:hyperbolic}. 
\item[ii)] $(f,V)$ satisfies the Palais-Smale condition, and 
\item[iii)] $V$ preserves essentially vertical sets. 
\end{enumerate}
Then, for any $(u_-,v_-),(u_+,v_+) \in \mathrm{crit}(f)$, the intersection 
$$W^u((u_-,v_-);V) \cap W^s((u_+,v_+);V)$$ 
is pre-compact. In particular, if transverse and non-empty, it is a finite dimensional submanifold of $X$. 
\label{teo:precompactness}
\end{thm}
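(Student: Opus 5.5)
Following the scheme of \cite[Theorem 6.5]{AM:05}, the proof has two parts: first show that $\mathcal M := W^u((u_-,v_-);V)\cap W^s((u_+,v_+);V)$ is essentially vertical, then upgrade essential verticality to precompactness using the Palais--Smale condition.

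For the first part, boundedness comes from ii) exactly as in the finite index case (cf.\ \cite[Proposition 2.5]{Asselle:2025}). Every point of $\mathcal M$ lies in the strip $\{f(u_+,v_+)\le f\le f(u_-,v_-)\}$. If $\mathcal M$ were unbounded, we could pick flow lines that travel a long distance in this strip. Because $V$ has at most linear growth, $\|V\|$ is bounded on bounded sets, so the time spent on such an excursion is large. On the other hand, the total decrease of $f$ along a flow line equals $\int |\mathrm df[V]|$ and is bounded by $f(u_-,v_-)-f(u_+,v_+)$. Hence we find points where $\mathrm df[V]\to 0$ with $f$ bounded, and these points stay away from the critical points. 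This contradicts ii).

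Next, $\mathcal M$ is contained in the local unstable manifold flowed forward. By i), near $(u_-,v_-)$ the local unstable manifold $W^u_{\mathrm{loc}}$ is a small ball $\mathcal D$ in $(u_-,v_-)+X^-$. Since $\mathbb H^-$ is a compact perturbation of $\{0\}\times W^{1,2}_0(\Omega)$, and using elliptic regularity as in the proof of Lemma \ref{lem:splittingpersistence}, I expect $\mathrm{pr}_1(\mathcal D)$ to be precompact in $W^{1,2p}_0(\Omega)$, so $\mathcal D$ is essentially vertical. \textbf{This is the step I expect to be the main obstacle.} The compactness of $P_{\mathbb H^-}-P_{\{0\}\times W^{1,2}_0}$ only gives precompactness of $\mathrm{pr}_1(\mathcal D)$ in $W^{1,2}$. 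To upgrade to $W^{1,2p}$, I would bootstrap the equation defining $X^-$: for $(\xi,\eta)\in X^-$ the first component satisfies $\xi\in$ (finite-dimensional part) $+$ (image of the compact operator $K$ composed with a regularizing inverse). This should give compactness in $W^{1,2p}$ because $(\bar u,\bar v)\in C^1(\overline\Omega)$.

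The flow does not preserve essential verticality for all times, only on compact time intervals by iii), so we cannot simply flow $\mathcal D$ forward forever. Instead we bound the time. Every point of $\mathcal M$ equals $\varphi_V(t,x)$ for some $x\in\partial\mathcal D$ with $t\ge 0$. Since $f$ drops by a uniform amount outside $U$ (by ii) and boundedness), there is a uniform $T$ such that every point of $\mathcal M$ outside a small neighbourhood $B$ of $(u_+,v_+)$ is of the form $\varphi_V(t,x)$ with $t\le T$. By iii), $\varphi_V([0,T]\times\mathcal D)$ is essentially vertical. Near $(u_+,v_+)$ the flow is linear by i), and $\mathcal M\cap B$ is contained in the local stable manifold, which is a ball in $X^+$. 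To show $\mathrm{pr}_1$ of this piece is precompact, I would pull it back: these points are images under the linear flow of points in $\varphi_V([0,T]\times\mathcal D)\cap\partial B$. The linear flow $e^{tL}$ is contracting on $X^+$, so it preserves essential verticality uniformly in $t\ge0$, being compatible with the compact-perturbation structure. Thus $\mathcal M$ is essentially vertical.

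For the second part, take a sequence $z_k=\varphi_V(t_k,x_k)\in\mathcal M$. Extract $\mathrm{pr}_1(z_k)\to a$. The second component is then controlled using the form of $V$: along the flow, $v$ evolves by $\dot v=v+\text{compact}$. Integrating backwards from $W^s$, we get $v(t)=-\int_t^\infty e^{t-s}(\text{compact}(u(s),v(s)))\,ds$, and this lies in a compact set once the trajectory is bounded and $\mathrm{pr}_1$ is precompact. In the general case, one uses \cite[Theorem 6.5]{AM:05}'s argument with compactness of Palais--Smale-type limits. Hence $\mathcal M$ is precompact. Finally, transversality makes $\mathcal M$ a submanifold whose dimension is the relative index difference, which is finite. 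A precompact manifold modelled on a Banach space must be finite dimensional by Riesz' lemma.
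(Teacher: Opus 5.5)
\textbf{Gap 1: the uniform time bound in Part 1 is false in general.} Palais--Smale for $(f,V)$ gives $-\mathrm{d}f[V]\ge\epsilon>0$ in the strip only \emph{away from all} critical points. So it bounds the time an orbit of $\mathcal M$ spends outside small balls around $\mathrm{crit}(f)$. It does not bound the time spent near an intermediate critical point $z$ with $f(u_+,v_+)<f(z)<f(u_-,v_-)$. Suppose $\mathcal M$ contains flow lines accumulating on a broken trajectory $(u_-,v_-)\to z\to(u_+,v_+)$, which is the typical situation. The points where these lines leave a neighbourhood of $z$ lie outside $B$, yet they are reached from $\partial\mathcal D$ only after times tending to $+\infty$. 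Hence there is no $T$ with $\mathcal M\setminus B\subset\varphi_V([0,T]\times\mathcal D)$, and iii), which only acts on compact time intervals, gives you nothing. The paper handles exactly this with its ``minimality argument'' based on the finiteness of $\mathrm{crit}(f)$, which produces \emph{bounded} times $t_n\le 0\le s_n$ with $\varphi_V(t_n,x_n)\in\partial B_r(u_-,v_-)$ and $\varphi_V(s_n,x_n)\in\partial B_r(u_+,v_+)$. Within your scheme you could instead reuse the observation you make at $(u_+,v_+)$. On $B_r(z)$ the flow is $e^{tL_z}$, which preserves essential verticality uniformly in $t$ along segments staying in $B_r(z)$: the $X^-_z$-component stays bounded, $\mathrm{pr}_1$ is compact on bounded subsets of $X^-_z$, and the $X^+_z$-component is only shrunk. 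Palais--Smale then bounds the number of passages between balls. As written, however, this step is missing.

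\textbf{Gap 2: Part 2 does not prove the theorem as stated.} The hypotheses give only iii), not $V=(-u,v)+K$. Deferring ``the general case'' to Palais--Smale-type compactness does not work, because Palais--Smale gives no control on $\mathrm{pr}_2$ and essential verticality controls only $\mathrm{pr}_1$. The missing idea is the dual statement at the other endpoint. The local stable manifold of $(u_+,v_+)$ lies in $(u_+,v_+)+X^+$, and $\mathbb H^+$ is a compact perturbation of $W^{1,2}_0(\Omega)\times\{0\}$. Hence bounded subsets of it have precompact $\mathrm{pr}_2$; they are ``essentially horizontal''. Your entry points into $B$ already lie there, and once Gap 1 is fixed they are essentially vertical. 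So both projections are precompact, the entry points form a precompact set, and flowing back for a bounded time gives precompactness of $\mathcal M$. This is the paper's argument. The $W^{1,2}$ versus $W^{1,2p}$ issue you single out as the main obstacle is simply taken for granted in the paper, through its identification of compact perturbations via the Hilbert extensions $\mathbb H^\pm$.

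\textbf{What does work: the special case $V=(-u,v)+K$ with $K$ compact.} Here your integral formula is a genuinely different and shorter route. Combine it with the analogous formula $u(t)=\int_{-\infty}^t e^{s-t}K_1(\gamma(s))\,\mathrm{d}s$ on $W^u$. Both components of every point of $\mathcal M$ then lie in the closed convex hulls of $K_1(\mathcal M)$ and $K_2(\mathcal M)$, which are compact by Mazur's theorem once $\mathcal M$ is bounded. So in that case boundedness alone suffices, and you need neither essential verticality, nor time bounds, nor the regularity upgrade. This covers the fields $W+C$ used for transversality, but not hypothesis iii) in general.
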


We refrain to give a complete proof of Theorem \ref{teo:precompactness} here because it follows from the proof of Theorem 6.5 in \cite{AM:05} with some minor adaptation, and just give an idea of the proof instead. Take a sequence $\{x_n\} \subset W^u((u_-,v_-);V) \cap W^s((u_+,v_+);V)$. 
Assume without loss of generality that no subsequence converges to $(u_-,v_-)$ or $(u_+,v_+)$. A minimality argument on $(u_-,v_-)$ and $(u_+,v_+)$ (recall that $\text{crit}(f)$ is a finite set) and the Palais-Smale condition imply that there exist bounded sequences $\{t_n\}\subset (-\infty,0]$
and $\{s_n\}\subset [0,+\infty)$ 
such that 
$$\Phi_{t_n}^V(x_n) \in \partial B_r(u_-,v_-),\quad \Phi_{s_n}^V(x_n) \in \partial B_r(u_+,v_+),$$ 
where $B_r(u_-,v_-)$ and $B_r(u_+,v_+)$ are chosen so that, over them, $V$ coincides with the linear vector fields given by Proposition \ref{prop:hyperbolic}. This implies in particular that 
$$\{\Phi_{t_n}^V(x_n)\}\subset X_{(u_-,v_-)}^-,\quad \{ \Phi_{s_n}^V(x_n)\} \subset X_{(u_+,v_+)}^+,$$
hence 
$\{\Phi_{t_n}^V(x_n)\}$ is essentially vertical and $\{ \Phi_{s_n}^V(x_n)\}$ is essentially horizontal, meaning that its projection to $\{0\}\times W^{1,2q}_0(\Omega)$ is pre-compact. Now, since $V$ preserves essentially vertical sets, we further have that 
$$\{ \Phi_{s_n}^V(x_n) \} \subset \Phi^V \big ([0,T] \times \{\Phi_{t_n}^V(x_n)\}\big ), \quad T:= \max \{t_n-s_n\ |\ n\in \N\} < +\infty,$$ 
is essentially vertical. This implies that $\{ \Phi_{s_n}^V(x_n) \}$ is pre-compact, being its projections on both factors pre-compact, and the pre-compactness of $\{x_n\}$ follows. 

\begin{rmk}
At this point we cannot yet say that the dimension of 
$$W^u((u_-,v_-);V) \cap W^s((u_+,v_+);V)$$ 
is given by $\mu_-(u_-,v_-)- \mu_- (u_+,v_+)$, that is, by the difference of the relative Morse indices. This will follow, for $V$ of the form \eqref{eq:compactperturbation} and for any $\gamma:\R\to X$ flow line of $V$ contained in $W^u((u_-,v_-);V) \cap W^s((u_+,v_+);V)$, from the Fredholm properties 
of the operator
$$\frac{\mathrm d}{\mathrm dt} - \mathrm d V(\gamma(t)) : C^1_0(\R,X) \to C^0_0(\R,X).$$
As we shall see below, this is enough for us, because  transversality will be achieved by perturbing the gradient-like vector field $W$ given in Proposition \ref{prop:gradientc2} within the class of vector fields of the form \eqref{eq:compactperturbation}. 
The claim should remain true for arbitrary $V$ preserving essentially vertical sets (see \cite[Theorem E]{AM:03} for an analogous statement in the Hilbert setting), however we do not need such a result here. 
\end{rmk}


\section{Fredholm theory and transversality}

\label{sec:4}

In this section we show that we can generically perturb the gradient-like vector field $W$ given in Proposition \eqref{prop:gradientc2} in such a way that the pair $(f,W)$ satisfies the Morse-Smale condition up to order two, meaning that stable and unstable manifold of pairs of critical points whose relative Morse indices differ by at most two intersect transversally. As a bi-product we will show the piece of information which was missing in Theorem \ref{teo:precompactness}, namely that the dimension is given precisely by the difference of relative Morse indices.  

As a first step, we set\footnote{In what follows $X$ could be replaced by any reflexive Banach space.}
\begin{align*}
C^1_0(\R,X) &:= \Big \{\gamma\in C^1(\R,X)\ \Big |\ \lim_{t\to \pm \infty} \gamma(t) = \lim_{t\to \pm \infty} \dot \gamma(t) = 0\Big \},\\
C^0_0(\R,X) &:= \Big \{\gamma\in C^0(\R,X)\ \Big |\ \lim_{t\to \pm \infty} \gamma(t) = 0\Big \}.
\end{align*}
and consider operators of the form 
\begin{equation}
D_A:= \frac{\mathrm d}{\mathrm d t} - A(\cdot): C^1_0(\R,X) \to C^0_0(\R,X).
\label{eq:DA}
\end{equation}
where $A(\cdot) := A_0 + K(\cdot)$, with $K:\R\to L_c(X)$ a continuous path of compact operators on $X$ such that  
$$K(t) \to K(\pm \infty), \quad \text{for} \ t \to \pm \infty.$$

In the next lemma we discuss the Fredholm properties of $D_A$ following \cite{Lat:2003} (see also \cite{Rabier} for the particular case $A(-\infty)=A(+\infty)$ and \cite[Theorem B]{AM:03} for an analogous statement in the case of Hilbert spaces).

\begin{lem}
The operator $D_A$ defined in \eqref{eq:DA} is Fredholm if and only if 
the asymptotic operators $A(\pm \infty):= A_0 + K(\pm \infty)$ are iperbolic\footnote{Recall that $T\in L(X)$ is called hyperbolic if its spectrum $\sigma(T)$ is disjoint from $i\R$.}. 
In this case, the pair $(V^-(A(-\infty)),V^+(A(+\infty))$ is a Fredholm pair, where 
$$X = V^-(A(\pm \infty)) \oplus V^+(A(\pm \infty))$$
denotes the $A(\pm \infty)$-invariant splitting of $X$ into closed subspaces given by the spectral decomposition respectively, and we have
$$\mathrm{ind}\, D_A = \mathrm{ind} \big (V^-(A(-\infty)),V^+(A(+\infty))\big ).$$
\label{lem:Fredholmness}
\end{lem}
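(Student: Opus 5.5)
We plan to reduce the statement to the general Fredholm theory for asymptotically hyperbolic linear ODEs on Banach spaces developed in \cite{Lat:2003}, and then use the special structure $A(t)=A_0+K(t)$ with $K(t)$ compact to identify the index with a relative dimension.

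\emph{Necessity.} If, say, $A(+\infty)$ is not hyperbolic, pick $i\lambda\in\sigma(A(+\infty))$. Since $A(+\infty)-i\lambda$ is not invertible, there are approximate eigenvectors $x_k$ with $\|x_k\|=1$ and $\|(A(+\infty)-i\lambda)x_k\|\to 0$; if the failure is only of surjectivity we argue on the dual operator instead. We then build cut-offs $\gamma_k(t)=\beta((t-T_k)/k)e^{i\lambda t}x_k$, with $\beta$ a fixed bump and $T_k\to+\infty$ chosen so that $\|K(t)-K(+\infty)\|$ is small on the support; we work in the complexification, or with the real $2\times 2$ block for $\lambda\neq 0$. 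For these we get $\|D_A\gamma_k\|\to 0$ while $\|\gamma_k\|=1$. Since the supports escape to $+\infty$, $\gamma_k$ has no convergent subsequence, so $D_A$ is not semi-Fredholm (no estimate $\|\gamma\|\lesssim\|D_A\gamma\|+\|\text{compact}\|$). This is the standard Weyl-sequence argument, as in \cite[Theorem B]{AM:03}.

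\emph{Sufficiency and the index formula.} Assume $A(\pm\infty)$ is hyperbolic, so the spectral projections $P^\pm_{\pm\infty}$ give the splittings $X=V^-\oplus V^+$. On each half-line we use the exponential dichotomy of the autonomous operator $\frac{\mathrm d}{\mathrm dt}-A(\pm\infty)$, which is an isomorphism $C^1_0\to C^0_0$ on $\R$ via the Green kernel. A perturbation argument (roughness of exponential dichotomies) then gives a dichotomy for $A(t)$ on $(-\infty,-T]$ and on $[T,+\infty)$ for $T$ large. Solutions decaying at $+\infty$ form a space whose trace at $t=T$ is a closed complement-type perturbation of $V^+(A(+\infty))$, and similarly at $-\infty$ with $V^-(A(-\infty))$. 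Here we follow the convention of the statement for which spectral subspace is stable. Transporting both traces to $t=0$ by the evolution operator $U(t,s)$ gives two closed subspaces $E^u$ and $E^s$, and the standard gluing argument shows
\begin{align*}
\ker D_A\cong E^u\cap E^s,\qquad \mathrm{coker}\, D_A\cong X/(E^u+E^s).
\end{align*}
Hence $D_A$ is Fredholm iff $(E^u,E^s)$ is a Fredholm pair, with equal indices.

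\emph{Main obstacle.} It remains to show that $(E^u,E^s)$ is Fredholm with the same index as $(V^-(A(-\infty)),V^+(A(+\infty)))$. Here compactness is essential. Because $A(t)-A_0$ is compact for all $t\in[-\infty,+\infty]$, the evolution $U(t,s)$ differs from $e^{(t-s)A_0}$ by a compact operator. Moreover, spectral projections of $A(\pm\infty)$ differ by compact operators from one another, via the Riesz integral formula and the compactness of $A(+\infty)-A(-\infty)$. We first prove that $(V^-(A(-\infty)),V^+(A(+\infty)))$ is a Fredholm pair, since $P^-_{-\infty}$ is a compact perturbation of $P^-_{+\infty}$. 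We then show that $E^u$ and $E^s$ are images of these spaces under Fredholm-index-zero deformations, namely isomorphism plus compact. The index equality then follows from homotopy invariance, deforming $K(t)$ to a path with $K\equiv K(\pm\infty)$ outside a compact interval and then concatenating. The delicate point is making the "compact perturbation of subspaces" notion precise in a Banach (non-Hilbert) space, where orthogonal projections are unavailable. We would phrase everything through the spectral projections and the Riesz formula, following \cite{Lat:2003}, and where necessary cite their result directly.
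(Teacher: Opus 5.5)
Your route differs from the paper's. The paper never constructs dichotomies for $A$ itself. It writes $A(t)=C(t)+\tilde K(t)$ with $C\equiv A(-\infty)$ for $t<0$, $C\equiv A(+\infty)$ for $t\geq 0$, and $\tilde K$ compact-valued and vanishing at $\pm\infty$. It then applies \cite[Prop.~7.15]{Lat:2003}, after checking that the compactness of the operators $K_n$ in \cite[Prop.~7.6]{Lat:2003} survives the jump of $\tilde K$ at $t=0$. For the model path $C$ the relevant subspaces at $t=0$ are exactly the spectral ones, so the only extra input is the Riesz-formula compactness of $P^-(A(+\infty))-P^-(A(-\infty))$, which you also use. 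Your Weyl-sequence argument for necessity is correct and more explicit than the paper's. The dual case is unnecessary: $i\R$ is connected and unbounded and $\sigma(A(\pm\infty))$ is compact, so $i\R$ meets $\partial\sigma\subset\sigma_{\mathrm{ap}}$. Your gluing identification of $\ker D_A$ and $\mathrm{coker}\,D_A$ is standard.

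The gap is at the step you flag as the main obstacle. The two compactness facts you list do not show that $E^s$ is the image of $V^+(A(+\infty))$ under an invertible operator of the form identity plus compact. Write $E^s=U(0,T)S_T$. Compactness of $U(0,T)-e^{-TA(+\infty)}$ handles the transport, but roughness only makes $S_T$ a \emph{small} graph over $V^+(A(+\infty))$, and you need a \emph{compact} graph map. That is true: from the Lyapunov--Perron equation on $[T,\infty)$ the graph map is $-\int_T^\infty e^{(T-r)A(+\infty)}Q\,(K(r)-K(+\infty))\,\Lambda(r)\,\diff r$, where $Q$ is the complementary spectral projection and $\Lambda(r)\xi$ the bounded decaying solution, so it is a norm-convergent integral of compact operators. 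But it has to be proved, and likewise at $-\infty$.

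The homotopy fallback does not replace this, because a norm limit of Fredholm operators need not be Fredholm. Deforming $K$ to paths constant near $\pm\infty$ gives the index only once Fredholmness of $D_A$ is known.

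The cleanest repair is the mechanism behind the paper's citation. If $M:\R\to L_c(X)$ is norm-continuous with $M(t)\to 0$ as $t\to\pm\infty$, then $\gamma\mapsto M\gamma$ is compact from $C^1_0(\R,X)$ to $C^0_0(\R,X)$, by Arzel\`a--Ascoli with the $C^1$ bound giving equicontinuity. Take $A'=A_0+\chi K(-\infty)+(1-\chi)K(+\infty)$, with $\chi$ a cutoff equal to $1$ for $t\leq -1$ and $0$ for $t\geq 1$. Then $D_A-D_{A'}$ is compact. For $D_{A'}$, in your convention, the traces at $t=1$ are exactly $U'(1,-1)V^-(A(-\infty))$ and $V^+(A(+\infty))$. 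Moreover $U'(1,-1)-e^{2A(-\infty)}$ is compact and $e^{2A(-\infty)}$ preserves $V^-(A(-\infty))$. Your projection argument then gives both Fredholmness and the index, and the gluing only needs the explicit Green kernels of two autonomous hyperbolic equations. Because this cutoff is continuous, it also avoids the discontinuity at $t=0$ that the paper has to accommodate.
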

\begin{proof}
Following \cite[Section 7]{Lat:2003} we define the piecewise constant path of bounded operators
$$C:\R\to L(X), \quad C(t) := \left \{ \begin{array}{l} A(-\infty) \quad \text{for} \ t < 0,\\ A(+\infty) \quad \text{for} \ t\geq 0,\end{array}\right. $$
and 
$$\tilde K:\R\to L_c(X), \quad \tilde K(t) = \left \{ \begin{array}{l} K(t) - K(-\infty) \quad \text{for} \ t < 0, \\ K(t)-K(+\infty) \quad \text{for}\ t \geq 0,\end{array}\right.$$
so that 
$$A(t) = C(t) + \tilde K(t).$$
This way we see that $A(t)$ satisfies all assumptions\footnote{There is a sign discrepance between our setting and \cite{Lat:2003}, because there operators of the form $-\frac{\mathrm d}{\mathrm d t} + A(t)$ are considered.} of Proposition 7.15 in \cite{Lat:2003}, besides Property (P1) for $\tilde K(\cdot)$ which fails at $t=0$. However, a careful inspection of the proof of Proposition 7.6 in \cite{Lat:2003} shows that all assertions continue to hold even if  $\tilde K(\cdot)$ has isolated discontinuities, because this is enough to prove the compactness of the operators $K_n$ appearing in that proof. We are therefore allowed to apply Proposition 7.15 in our setting to conclude that $D_A$ is Fredholm if and only if $A(\pm \infty)$ are hyperbolic. The Fredholmness condition for the pair $(\text{Im} \, P_{A_+},\text{ker} \, P_{A_-})$ in Proposition 7.15 is redundant here, as one can readily see that such a pair is precisely given by $(V^-(A(-\infty)),V^+(A(+\infty)))$, which is Fredholm provided $A(\pm \infty)$ are hyperbolic. This last fact follows from the very structure of the path $A(t)$, which implies that the difference  $P^-(A(+\infty))- P^-(A(-\infty))$ of the spectral projections (corresponding to the part of the spectrum contained in $\{Re (\lambda )<0\}$) is a compact operator. The details are left to the reader. 
\end{proof}

Specifying Lemma \ref{lem:Fredholmness} to the setting of Section 3 we obtain the following 

\begin{cor}
Let $f$ be as in \eqref{eq:functional} with $G$ satisfying \eqref{eq:growthG}. Assume that $f$ is Morse and that $V$ is a $C^2$-smooth gradient-like vector field for $f$ of the form \eqref{eq:compactperturbation} satisfying additionally Hypotheses i)-ii)\footnote{Hypothesis iii) is satisfied as well since $V$ is of the form \eqref{eq:compactperturbation}.} of Theorem \ref{teo:precompactness}. Then, the following holds: for every $(u_-,v_-),(u_+,v_+)\in \mathrm{crit} (f)$ and for every $\gamma:\R\to X$ flow line of $V$ contained in $W^u((u_-,v_-);V) \cap W^s((u_+,v_+);V)$, the operator 
\begin{equation}
D_{DV}:\frac{\mathrm d}{\mathrm dt} - \mathrm dV(\gamma(t)): C^1_0(\R,V) \to C^0_0(\R,X)
\label{eq:DDV}
\end{equation}
is Fredholm with Fredholm index given by 
$$\mathrm{ind}\, D_{DV}= \mathrm{ind} \big (V^-(\mathrm dV(u_-,v_-))),V^+(\mathrm dV(u_+,v_+))\big ) = \mu_-(u_-,v_-)-\mu_-(u_+,v_+),$$
and $W^u((u_-,v_-);V)$ and $W^s((u_+,v_+);V)$ have Fredholm intersection at $\gamma(t)$ for every $t\in \R$, with 
$$\mathrm{ind} \big (T_{\gamma(t)} W^u((u_-,v_-);V), T_{\gamma(t)} W^s((u_+,v_+);V)\big ) = \mu_-(u_-,v_-)-\mu_-(u_+,v_+).$$
\label{cor:Fredholmness}
\end{cor}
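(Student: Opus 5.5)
The plan is to reduce the corollary to Lemma \ref{lem:Fredholmness} by writing the path $t\mapsto \mathrm dV(\gamma(t))$ in the form $A_0+K(t)$, and then to compute the relative index of the asymptotic spectral subspaces in terms of $\mu_-$. Since $V$ is of the form \eqref{eq:compactperturbation}, its differential is $\mathrm dV(x)=(-\mathrm{id},\mathrm{id})+\mathrm dK(x)$. Here $\mathrm dK(x)$ is compact, being the differential of a $C^1$ compact map. We therefore set $A_0:=(-\mathrm{id},\mathrm{id})$ and $K(t):=\mathrm dK(\gamma(t))$.

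Since $\gamma(t)\to(u_\pm,v_\pm)$ as $t\to\pm\infty$ and $\mathrm dK$ is continuous, $K(t)\to K(\pm\infty)$ in operator norm. By Hypothesis i), near each critical point $V$ coincides with the linear field $L_{(u_\pm,v_\pm)}$. Hence $A(\pm\infty)=L_{(u_\pm,v_\pm)}$, which acts as $\mathrm{id}$ on $X^-$ and as $-\mathrm{id}$ on $X^+$ for the splitting at $(u_\pm,v_\pm)$. These operators are hyperbolic, with spectrum contained in $\{\pm1\}$. Lemma \ref{lem:Fredholmness} then gives that $D_{DV}$ is Fredholm and that its index equals $\mathrm{ind}(V^-(A(-\infty)),V^+(A(+\infty)))$. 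The spectral subspaces are read off directly: the subspace on which the linearized flow expands (the unstable directions) is $X^-_{(u_-,v_-)}$, and the contracting subspace at $+\infty$ is $X^+_{(u_+,v_+)}$. The sign conventions must be matched carefully with those of the lemma (compare the footnote on \cite{Lat:2003}).

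The core step is the index computation
$$\mathrm{ind}\big(X^-_{(u_-,v_-)},X^+_{(u_+,v_+)}\big)=\mu_-(u_-,v_-)-\mu_-(u_+,v_+).$$
I will pass to the Hilbert completions, taking care that $\mathbb H_{(u_-,v_-)}$ and $\mathbb H_{(u_+,v_+)}$ carry different but equivalent scalar products, both equivalent to $W^{1,2}_0\times W^{1,2}_0$. Both $\mathbb H^-_{(u_\pm,v_\pm)}$ are compact perturbations of $E:=\{0\}\times W^{1,2}_0$, and both $\mathbb H^+_{(u_\pm,v_\pm)}$ are compact perturbations of $E':=W^{1,2}_0\times\{0\}$. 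The standard cocycle formula (e.g.\ \cite{AM:01}) then gives
$$\mathrm{ind}(\mathbb H^-_-,\mathbb H^+_+)=\dim(E,\mathbb H^-_-)+\mathrm{ind}(E,E')-\dim(E,\mathbb H^-_+)=\mu_-(u_-,v_-)-\mu_-(u_+,v_+),$$
since $(E,E')$ is a transversal complementary pair and so has index zero.

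It remains to transfer this identity back to $X$. Kernels and cokernel complements of $X^-_-+X^+_+$ are finite dimensional and lie in $X$ by elliptic regularity, as in Section \ref{sec:2}, and $X$ is dense in $\mathbb H$. Showing that the Banach index equals the Hilbert index is what I expect to be the main obstacle. For the sum $X^-_-+X^+_+$ I would argue that its closedness and finite codimension in $X$ follow from those in $\mathbb H$, using that the relevant projection differences are compact and map $X$ to $X$. Alternatively, I would avoid the issue by proving the formula for a path from $A(-\infty)$ to $A(+\infty)$ via homotopy invariance of the index together with additivity under concatenation, reducing to pairs each compared to $(E\cap X,E'\cap X)$.

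Finally, the statement about Fredholm intersection of the invariant manifolds follows from the standard identification of $\ker D_{DV}$ with $T_{\gamma(t)}W^u\cap T_{\gamma(t)}W^s$, together with the identification of the cokernel with a complement of $T_{\gamma(t)}W^u+T_{\gamma(t)}W^s$. Both come from solving the linearized equation on the half-lines with exponential dichotomies provided by hyperbolicity, and they give the same index.
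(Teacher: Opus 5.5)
Your proposal is correct and follows the paper's route: write $\mathrm dV(\gamma(t))=(-\mathrm{id},\mathrm{id})+\mathrm dK(\gamma(t))$, use Hypothesis i) to get $\mathrm dV(\gamma(t))\equiv L_{(u_\pm,v_\pm)}$ for $\pm t\gg 1$, apply Lemma \ref{lem:Fredholmness}, and identify the asymptotic unstable and stable subspaces with $X^-_{(u_-,v_-)}$ and $X^+_{(u_+,v_+)}$ to obtain $\mu_-(u_-,v_-)-\mu_-(u_+,v_+)$. The paper only cites the definition of $\mu_-$ for that last identity, so your cocycle computation in the Hilbert completions and your regularity/density transfer back to $X$ fill in details the paper leaves implicit. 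For the intersection statement you use the kernel/cokernel identification that the paper states as Corollary \ref{cor:surjective}; this is not circular, since that identification needs only the Fredholmness you establish first.
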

\begin{proof}
The Fredholmness of $D_{DV}$ follows immediately from Lemma \ref{lem:Fredholmness} noticing that 
$$\mathrm d V(\gamma(t)) = (-\text{id},\text{id}) + \mathrm dK(\gamma(t)),$$
with $t\mapsto \mathrm d K(\gamma(t))$ continuous path of compact operators and that 
$$\mathrm dV(\gamma(t)) \to L_{(u_\pm,v_\pm)} \quad \text{for} \ t \to \pm \infty\footnote{Actually, by construction we have $\mathrm dV(\gamma(t)) \equiv L_{(u_\pm,v_\pm)}$, for $\pm t >\!\!> 1.$}.$$
The last part of the statement follows from the fact that $V^\pm(\mathrm dV(u_\pm,v_\pm))= X^\pm_{(u_\pm,v_\pm)}$, where $X=X^-_{(u_\pm,v_\pm)}\oplus X^+_{(u_\pm,v_\pm)}$ is the splitting introduced in Section 2, and the definition of the relative Morse index. 
\end{proof}

\begin{cor}
Under the assumptions of Corollary \ref{cor:Fredholmness}, we have that, for all $t\in \R$
\begin{align*}
\ker D_{DV} &\cong T_{\gamma(t)} W^u((u_-,v_-);V)\cap T_{\gamma(t)} W^s((u_+,v_+);V), \\
 \mathrm{coker} D_{DV} & \cong X / \big (T_{\gamma(t)} W^u((u_-,v_-);V) + T_{\gamma(t)} W^s((u_+,v_+);V)\big ).
 \end{align*}
 In particular, $D_{DV}$ is onto if and only if $W^u((u_-,v_-);V)$ and $W^s((u_+,v_+);V)$ meet transversally at $\gamma(t)$ for some (and hence all) $t\in \R$. 
\label{cor:surjective}
\end{cor}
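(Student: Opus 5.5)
The plan is to realize both $\ker D_{DV}$ and $\mathrm{coker}\, D_{DV}$ through the evolution operator of the linearized equation along $\gamma$, and to compare them with the tangent spaces of the invariant manifolds at a fixed time $t_0$. Write $\Phi(t,s)$ for the two-parameter family of bounded operators solving $\partial_t\Phi(t,s)=\mathrm dV(\gamma(t))\Phi(t,s)$, $\Phi(s,s)=\mathrm{id}$. It exists for all $t,s$ because $\mathrm dV(\gamma(\cdot))$ is a continuous path in $L(X)$. Since $\mathrm dV(\gamma(t))\equiv L_{(u_\pm,v_\pm)}$ for $\pm t\gg 1$, the hyperbolic splittings $X=X^-_{(u_\pm,v_\pm)}\oplus X^+_{(u_\pm,v_\pm)}$ give exponential dichotomies on half-lines $(-\infty,-T]$ and $[T,+\infty)$.

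\textbf{Kernel.} Fix $t_0$. A solution $\xi$ of $\dot\xi=\mathrm dV(\gamma)\xi$ is determined by $\xi(t_0)$ via $\xi(t)=\Phi(t,t_0)\xi(t_0)$. It decays at $+\infty$ if and only if $\Phi(T,t_0)\xi(t_0)\in X^+_{(u_+,v_+)}$. It decays at $-\infty$ if and only if $\Phi(-T,t_0)\xi(t_0)\in X^-_{(u_-,v_-)}$ (here $V=Lx$ near the critical points, with $X^-$ the expanding and $X^+$ the contracting direction). Next, I would check that $T_{\gamma(t_0)}W^u((u_-,v_-);V)=\Phi(t_0,-T)X^-_{(u_-,v_-)}$ and $T_{\gamma(t_0)}W^s((u_+,v_+);V)=\Phi(t_0,T)^{-1}X^+_{(u_+,v_+)}$. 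This holds because near the critical points the local unstable and stable manifolds are exactly the affine subspaces $(u_\pm,v_\pm)+X^\mp$ intersected with small balls, and the global ones are obtained by transporting them with the $C^2$ flow, whose differential is $\Phi$. Decay of $\dot\xi$ is automatic, so the map $\xi\mapsto\xi(t_0)$ gives $\ker D_{DV}\cong T_{\gamma(t_0)}W^u\cap T_{\gamma(t_0)}W^s$.

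\textbf{Cokernel.} I would solve $D_{DV}\xi=\eta$ by variation of constants. On $[T,\infty)$ the dichotomy produces a particular solution decaying at $+\infty$ for every $\eta\in C^0_0$: integrate the $X^+$ component from $T$ and the $X^-$ component backward from $+\infty$. The same works on $(-\infty,-T]$. Transporting both particular solutions to $t_0$, one gets $\xi_+(t_0)$ and $\xi_-(t_0)$. The full solution set decaying at $+\infty$ is $\xi_+(t_0)+T W^s$, and the one decaying at $-\infty$ is $\xi_-(t_0)+TW^u$. Hence $\eta\in\mathrm{Im}\,D_{DV}$ if and only if $\xi_+(t_0)-\xi_-(t_0)\in TW^u+TW^s$. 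The linear map $\eta\mapsto[\xi_+(t_0)-\xi_-(t_0)]\in X/(TW^u+TW^s)$ is then well defined. It is surjective: take $\eta=\dot\beta\,\Phi(\cdot,t_0)x$ with $\beta$ a cutoff, which gives difference $x$ up to sign. Its kernel is exactly $\mathrm{Im}\,D_{DV}$, which yields the stated cokernel isomorphism.

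\textbf{Conclusion and main obstacle.} Transversality at $\gamma(t_0)$ means $TW^u+TW^s=X$, i.e.\ the cokernel vanishes, i.e.\ $D_{DV}$ is onto. Independence of $t_0$ follows because the isomorphisms hold for every $t_0$, or because $\Phi(t_1,t_0)$ is an isomorphism mapping the tangent spaces onto each other. I expect the main obstacle to be the careful identification of the tangent spaces of the global invariant manifolds with $\Phi$-transports of $X^\mp$. This needs the flow to be $C^1$ with differential $\Phi$, and $\Phi(t,s)$ to be invertible on $X$ for all $t,s$. Invertibility holds because $\Phi$ solves a linear ODE with bounded coefficients, so backward evolution is available. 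Nonuniqueness issues of infinite-dimensional backward flows do not arise.
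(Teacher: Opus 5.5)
Your proposal is correct and follows essentially the same route as the paper, which invokes Propositions 1.6 and 1.8 of Abbondandolo--Majer and notes that finite dimensionality there only serves to get Fredholmness: you identify the solutions decaying at $\pm\infty$ with $\Phi(t_0,-T)X^-_{(u_-,v_-)}=T_{\gamma(t_0)}W^u$ and $\Phi(t_0,T)X^+_{(u_+,v_+)}=T_{\gamma(t_0)}W^s$, take the kernel as their intersection, and get the cokernel by variation of constants, all without any dimension count. The only slip is notational: with your convention $\xi(t)=\Phi(t,t_0)\xi(t_0)$, the stable tangent space is $\Phi(T,t_0)^{-1}X^+_{(u_+,v_+)}=\Phi(t_0,T)X^+_{(u_+,v_+)}$, not $\Phi(t_0,T)^{-1}X^+_{(u_+,v_+)}$, as your own kernel characterization shows.
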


\begin{proof}
All claims follow adapting the proof of Propositions 1.6 and 1.8 in \cite{Abbondandolo:2006lk} to our setting, noticing that there the finite dimensionality assumptions are only needed to deduce that the operator is Fredholm. In our case, the Fredholmness is given Corollary \ref{cor:Fredholmness} above. 
\end{proof}

We are now ready to prove the Morse-Smale condition up to order 2 for the functional $f$ as in \eqref{eq:functional} after a generic perturbation of the gradient-like vector field $W$ given by Proposition \ref{prop:gradientc2}. To this purpose, we employ the abstract transversality theorem proved in \cite[Section 5]{Asselle:2022}. We shall notice that Theorem 5.5 in \cite{Asselle:2022} is proved for Hilbert manifolds only, but the extension to the setting of the present paper presents no difficulties as we will discuss below. For this reason, we only give a sketch of the proof, referring to \cite{Asselle:2022} for the details. The Morse-Smale condition up to order 2 will follow from a version of the Sard-Smale theorem due to Quinn and Sard \cite{QS:72} which in the setting of the present paper reads: \textit{let $\varphi:Y\to Z$ be a $C^2$-smooth $\sigma$-proper Fredholm map between the Banach spaces $Y$ and $Z$ with Fredholm index at most 2. Then, the set of regular values of $\varphi$ is generic in $Z$}. Recall that $\varphi$ is called $\sigma$-proper if $Y$ is the countable union of open sets, on the closure of each of which $\varphi$ is proper, and is required as the Banach spaces we are interested in do not satisfy the Lindel\"of property (a necessary 
    condition for the original version of the Sard-Smale theorem).  To explain how we apply the Sard-Smale theorem, let us consider
 neighborhoods $\mathcal U\subset \mathcal V \subset X$ of the set crit$(f)$ of critical points of $f$ such that each critical point of $f$ belongs to a different connected component of $\mathcal V$, and let $\mathfrak C$ be the space of compact vector fields $C$ of class $C^2$ on $X$ having at most linear growth and such that:
 \begin{enumerate}
 \item[(B1)] every $C\in \mathfrak C$ vanishes on $\mathcal U$.
 \end{enumerate}
On $\mathfrak C$ we can introduce a norm $\|\cdot\|_{\mathfrak C}$ which induces the topology of $C^2_{\mathrm{loc}}$-convergence and such that:
\begin{enumerate}
  \item[(B2)] for every $C\in \mathfrak C$ with $\|C\|_{\mathfrak C}\leq 1$, the set of rest points of $W+C$ coincides with crit$(f)$, $f$ is a Lyapounov function for $W+C$, and $(f,W+C)$ satisfies the Palais-Smale condition. 
  \end{enumerate}
For instance, pick a smooth function $\chi:[0,+\infty)\to \R$ such that 
$$0<\chi (\rho) < \frac 12 \inf_{B_\rho(0)\setminus \mathcal U} - \mathrm{d} f[W], \quad \forall \rho \geq 0,$$
where $B_\rho(0)$ denotes the open ball with radius $\rho$ around the origin in $X$, and define for every $C\in \mathfrak C$
$$\|C\|_{\mathfrak C} := \| \chi^{-1}\cdot C\|_{C^2}.$$
The straightforward proof that (B2) is satisfied is left to the reader. Notice that $\mathfrak C$ also satisfies:
\begin{enumerate}
\item[(B4)] $\mathfrak C$ is closed under multiplication by a vector space of functions which includes bump functions,
\item[(B5)] $\{C(u) \, |\, C\in \mathfrak C\} =T_uX\cong X$, for all $u\in X\setminus \mathcal V$.
\item[(B6)] $W+C$ preserves essentially vertical sets (actually, even of the form \eqref{eq:compactperturbation}).
\end{enumerate}
We also notice that Properties (B1), (B2), and (B6), imply using Theorem \ref{teo:precompactness} that  
$$W^u((u_-,v_-);W+C) \cap W^s((u_+,v_+);W+C)$$
is pre-compact for all $\|C\|_{\mathfrak C}\leq 1$ and all $(u_\pm,v_\pm)\in$ crit$(f)$. 

Assume now that $(u_\pm,v_\pm)\in$ crit$(f)$ are such that $\mu_-(u_-,v_-)-\mu_(u_+,v_+)\leq 2$ and 
$$W^u((u_-,v_-);W)\cap W^s((u_+,w_+);W) \neq \emptyset.$$
We define the Banach space
$$K:= C_{\pm}^1(\R,X) := \big \{ \varphi:\R\to X \ |\ \varphi(t) \stackrel{t\to \pm \infty}{\longrightarrow} (u_\pm,v_\pm), \ \dot \varphi(t) \stackrel{t\to \pm \infty}{\longrightarrow} 0 \big \}$$
and observe that the tangent space to $K$ at each $\varphi\in K$ can be identified with 
$$T_\varphi K \cong C^1_{0}(\R,X) \subset B := C^0_0(\R,X).$$
Finally, denoting with $\mathfrak C_1$ the unit ball of $\mathfrak C$ we set 
$$\Phi:\mathfrak C_1 \times K \to B, \quad (C,\varphi)\mapsto \varphi'- (W+C)\circ \varphi,$$
so that 
$$\mathcal Z :=\Phi^{-1}(0)  = \bigcup_{C\in \mathfrak C_1} \Big ( W^u((u_-,v_-);W+C) \cap W^s((u_+,v_+);W+C) \Big ).$$
The fact that $F$ is of class $C^2$ together with the fact that the topology on $\mathfrak C$ coincides with the topology of $C^2_{\mathrm{loc}}$-convergence implies that $\Phi$ is of class $C^2$. The Fredholm theory 
discussed in Lemma \ref{lem:Fredholmness} and Corollaries \ref{cor:Fredholmness} and \ref{cor:surjective} (these serve as a replacement of Lemma 5.6 in \cite{Asselle:2022} which cannot be used in our setting) implies that, for $(C,\varphi) \in \mathcal Z$, $\mathrm d_\varphi \Phi(C,\varphi)$ is Fredholm with Fredholm index $\mu_-(u_-,v_-)-\mu_-(u_+,v_+)$, and it is onto if and only if $W^u((u_-,v_-);W+C)$ and $W^s((u_+,v_+);W+C)$ meet transversally along $\varphi$. This together with Properties (B4) and (B5) implies that 0 is a regular value for $\Phi$, so that $\mathcal Z$ is a $C^2$-submanifold of $\mathfrak C_1\times K$; see Lemma 5.7 in \cite{Asselle:2022} for the details (such a lemma extends verbatim to the Banach setting).  

Let now $\mathcal S \subset \mathcal U$ be a small smooth sphere centered at $(u_-,v_-)$ and transversal to the flow of $W$ (hence, also to the flow of $W+C$ for every $C \in \mathfrak C$ by Property (B1)). We denote by $\mathcal Z_0\subset \mathcal Z$ the codimension-one $C^2$-submanifold given by pairs $(C,\varphi)\in\mathcal Z$ such that $\varphi(0) \in \mathcal S$, and by 
$$\pi :\mathcal Z_0 \to \mathfrak C_1, \quad (C,\varphi) \mapsto C,$$ 
 the projection onto the first factor. One readily sees that $\pi$ is Fredholm of index $\mu_(u_-,v_-)-\mu_-(u_+,v_+)-1$, and that $C\in\mathfrak C_1$ is a regular value of $\pi$ if and only if 
$W^u((u_-,v_-);W+C)$ and $W^s((u_+,v_+);W+C)$  have transverse intersection. Proposition 5.9 and Theorem 5.10 in \cite{Asselle:2022} now extend with no changes to the setting of the present paper (the only ingredients needed in the proofs being the Palais-Smale condition, the fact that all vector fields $W+C$ preserve essentially vertical sets, and the closure of the essentially vertical family with respect to the Hausdorff distance), thus showing that $\pi$ is $\sigma$-proper. 
 
We are now in position to apply the Sard-Smale theorem, thus obtaining that the set $\mathfrak C_1(u_\pm,v_\pm)$ of regular values of the map $\pi$ is generic in $\mathfrak C_1$. Since the set crit$(f)$ is at most countable (actually finite), the intersection 
 $$\mathfrak C_1^{MS}:= \bigcap \Big \{\mathfrak C_1(u_\pm,v_\pm) \ \Big |\ (u_-,v_-)\neq (u_+,v_+)\in \text{crit}(f), \ \mu_-(u_-,v_-)-\mu_-(u_+,v_+)\leq 2\Big \}$$ 
 is also a generic subset of $\mathfrak C_1$, and by construction, for every $C\in \mathfrak C_1^{MS}$, the vector field $W+C$ satisfies the Morse-Smale property up to order 2. 
 
 Once the Morse-Smale property up to order two is achieved (by a generic perturbation of $W$) the construction of Morse homology is standard. We refer to \cite[Section 6]{Asselle:2022} and references therein for the details.
 
 
 \section{Functoriality}
 
 \label{sec:5}
 
 In the previous sections we showed that, associated with a Morse function $f$ as in \eqref{eq:functional} with non-linearity $G$ satisfying  \eqref{eq:growthG} there is a well-defined Morse homology $HM_*(f,W;\Z_2)$, where $W$ is a (generic) gradient-like vector field for $f$ satisfying all properties of Proposition \ref{prop:gradientc2} such that $(f,W)$ is Morse-Smale up to order two. 
 It is now a standard argument (and indeed the proof presented in \cite[Section 7]{Asselle:2022} goes through word by word) to show that different choices of non-linearities $G_1$ and $G_2$ satisfying \eqref{eq:growthG} yield to isomorphic chain complexes. Below we state the functoriality theorem in rigorous way for the reader's convenience. 
 
 \begin{thm}
 The following statements hold:
 \begin{enumerate}
\item Let $f$ as in \eqref{eq:functional}, with non-linearity $G$ satisfying \eqref{eq:growthG}, be a Morse function, and let $W,\tilde W$ be gradient-like vector fields for $f$ satisfying the assumptions in Proposition \ref{prop:gradientc2} and such that $(f,W)$ and $(f,\tilde W)$ are Morse-Smale up to order two. Then, the corresponding Morse complexes are isomorphic. In particular, the induced homology does not depend on the choice of the gradient-like vector field. 
\item Let $f_0,f_1,f_2$ as in \eqref{eq:functional}, with non-linearities $G_0,G_1,G_2$ satisfying \eqref{eq:growthG}, be Morse functions, and assume that $G_0\leq G_1\leq G_2$. Then, for $0\leq i\leq j\leq 2$, there is a sequence of homomorphisms of Abelian groups 
$$\phi_{G_i,G_j}: HM_k(f_i;\Z_2) \to HM_k (f_j;\Z_2),\quad k\in \Z,$$
such that $\phi_{G_1,G_2}\circ \phi_{G_0,G_1} = \phi_{G_0,G_2}$ and $\phi_{G_0,G_0+c}=\mathrm{id}$ for all $c\geq 0$. In particular, the induced homology is independent of the choice of non-linearity satisfying \eqref{eq:growthG}. 
\end{enumerate}
 \label{teo:functoriality}
 \end{thm}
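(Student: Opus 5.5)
For part (1), I would use a continuation argument. Given two admissible gradient-like fields $W,\tilde W$ for the same Morse function $f$, I form the homotopy $W_\lambda := (1-\lambda)W + \lambda \tilde W$ for $\lambda\in[0,1]$. I then reparametrize it as a $\lambda$-dependent field on $X\times\R$, adding to $f$ a small function of $\lambda$ with critical points at the ends, in the spirit of \cite[Section 7]{Asselle:2022}.

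Both $W$ and $\tilde W$ have the form \eqref{eq:compactperturbation}, coincide with $L_{(\bar u,\bar v)}$ near each critical point, and satisfy \eqref{eq:estimatefreeze}-type bounds away from $\mathrm{crit}(f)$. Hence every convex combination has the same properties:
\begin{itemize}
\item it is of the form $(-u,v)+K_\lambda$ with $K_\lambda$ compact;
\item it has linear growth;
\item it is gradient-like, since $\mathrm df[W_\lambda]\le -c(\|\mathrm d_uf\|_*^{p'}+\|\mathrm d_vf\|_*^{q'})$ outside $U$ is convex in the field;
\item $(f,W_\lambda)$ is Palais--Smale, by the argument of Claim 4 combined with Lemma \ref{lem:Cerami}.
\end{itemize}
Spaces of connecting orbits of the non-autonomous equation $\dot\gamma = W_{\lambda(t)}(\gamma)$, with $\lambda(t)$ a monotone cutoff, then satisfy the following:
\begin{itemize}
\item they lie in a bounded set, by Palais--Smale, uniformly in $\lambda$;
\item they are precompact, by the essentially-vertical argument of Theorem \ref{teo:precompactness}, since the time-dependent flow still preserves essentially vertical sets by Proposition \ref{prop:modulestructure};
\item their linearizations are Fredholm of index $\mu_-(x_-)-\mu_-(x_+)$, by Lemma \ref{lem:Fredholmness}, which only requires $A(t)=A_0+K(t)$ with hyperbolic limits.
\end{itemize}
A generic compact perturbation, chosen as in Section \ref{sec:4} through the Quinn--Sard theorem applied to the $\sigma$-proper projection $\pi$, makes these spaces transverse up to index two. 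Counting index-zero solutions then defines a chain map $\Phi_{W\tilde W}$. Counting index-one solutions of two-parameter families, with the usual compactness and gluing of broken trajectories, gives chain homotopies $\Phi_{\tilde W W}\Phi_{W\tilde W}\simeq \mathrm{id}$ and functoriality. The constant homotopy induces the identity because the index-zero solutions are exactly the constant ones.

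For part (2), given $G_i\le G_j$ I would consider $G_s:=(1-s)G_i+sG_j$, which satisfies \eqref{eq:growthG} uniformly in $s$. The key structural point is that the comparison exponents are the same for all $s$: every $f_s$ has principal part $(\mathrm{id},-\mathrm{id})$ plus compact, so the relative Morse index is always measured against $\{0\}\times W^{1,2q}_0(\Omega)$. I would build a homotopy of gradient-like fields $W_s$ by doing the construction of Proposition \ref{prop:gradientc2} with $(u,v,s)$-dependent data. Since $f_s-f_{s'}=-\int(G_s-G_{s'})$ and $G_j-G_i\ge0$, the function $s\mapsto f_s(x)$ is nonincreasing. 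That monotonicity gives the a priori energy bound $f_{i}(x_-)\ge f_{j}(x_+)$ along continuation orbits, which is needed for compactness. The identity $\phi_{G_0,G_0+c}=\mathrm{id}$ holds because adding a constant changes neither $\mathrm df$ nor $W$, so the constant homotopy is allowed. The composition law follows by concatenating homotopies and using the standard gluing and homotopy-of-homotopies argument. Independence of $G$ follows by comparing $G$ and $G'$ through $\min$ and $\max$ with constants (for instance via the chain $G-c\le G'\le G+c'$ where applicable, or through intermediate nonlinearities), then using the composition law and the identity property to get isomorphisms.

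The main obstacle is the uniform Palais--Smale and boundedness estimate for continuation orbits in part (2), where the field changes with $s$. Lemma \ref{lem:Cerami} is stated for a fixed $f$, so I would first redo the bound \eqref{eq:boundednessPS} with constants independent of $s$; Remark \ref{rmk:emptysublevel} indicates that this works. I would then show that a continuation orbit cannot escape: outside $B_R^X(0)$ the quantity $-\mathrm df_s[W_s]$ is bounded below uniformly in $s$, while $|\partial_s f_s|$ grows sublinearly by \eqref{eq:growthG}, so the total drop in energy controls the time spent outside the ball. With that uniform bound in place, precompactness and transversality proceed as in Sections \ref{sec:3}--\ref{sec:4}.
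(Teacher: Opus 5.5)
Your continuation scheme matches the standard argument the paper defers to in \cite[Section 7]{Asselle:2022}, and for monotone homotopies it works. The genuine gap is the final assertion, independence of the nonlinearity. Sandwiching with constants gives an isomorphism $\phi_{G,G'}$ only when $G'-G$ is bounded. Intermediate nonlinearities cannot help, since bounded successive differences add up to a bounded total difference. In fact the two properties you use are consistent with non-invertible maps: setting $\phi_{G,G'}=\mathrm{id}$ if $G'-G$ is bounded and $0$ otherwise satisfies both the composition law and $\phi_{G,G+c}=\mathrm{id}$. Yet unbounded differences are exactly the case that matters, e.g.\ $G$ versus $G\equiv 0$ in Theorem \ref{teo:homologytrivial}.

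What is missing is a continuation map for the reversed, non-monotone homotopy, so that a homotopy-of-homotopies argument makes $\phi_{G,G'}$ invertible. Along that homotopy $s'\,\partial_s f_s\ge 0$, so the energy can increase and compactness must come from growth rather than from sign. Build the $W_s$ as in Claim 3 so that \eqref{eq:estimatefreeze} holds uniformly in $s$ outside a large ball. Then the estimates in the proof of Lemma \ref{lem:Cerami}, combined with Young's inequality, give $-\mathrm df_s[W_s]\gtrsim \|\nabla u\|_{2p}^{2p}+\|\nabla v\|_{2q}^{2q}$ there. This dominates $|s'|\,|\partial_s f_s|\lesssim 1+\|\nabla u\|_{2p}^{\alpha_1}+\|\nabla v\|_{2q}^{\alpha_2}$, because $\alpha_1<2p$ and $\alpha_2<2q$. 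Hence the energy can rise only by a bounded amount, only inside a fixed ball and during the time where $s'\neq 0$, and the a priori bound on orbits is restored. A mere positive lower bound on $-\mathrm df_s[W_s]$, which is all you state, does not suffice here.

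There are also three smaller points. First, your claim that $|\partial_s f_s|$ grows sublinearly is false. By \eqref{eq:growthG} it is of order $\|u\|_{\alpha_1}^{\alpha_1}+\|v\|_{\alpha_2}^{\alpha_2}$, which is superlinear once $\alpha_i>1$. This is harmless in the monotone case: there $s'\ge 0$ and $\partial_s f_s=-\int_\Omega (G_j-G_i)\le 0$, so $\frac{\mathrm d}{\mathrm dt}f_{s(t)}(\gamma(t))\le \mathrm df_{s}(\gamma)[W_{s}(\gamma)]\le 0$ and no bound on $|\partial_s f_s|$ is needed.

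Second, in part (1) you need not assume \eqref{eq:estimatefreeze} for $\tilde W$, which is not among the hypotheses. Since $\mathrm df[W]\le 0$ and $\mathrm df[\tilde W]\le 0$, if $\mathrm df[W_{\lambda_k}](x_k)\to 0$ then, along a subsequence, $\mathrm df[W](x_k)\to 0$ or $\mathrm df[\tilde W](x_k)\to 0$. So the Palais--Smale condition passes to convex combinations automatically.

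Third, chain homotopy equivalences give isomorphic homology, but part (1) claims isomorphic complexes. To get this, use that $f$ is Lyapunov for every $W_\lambda$, so a continuation orbit from $x$ to $y\neq x$ forces $f(y)<f(x)$. The constant orbit at $x$ is transversally cut out, since its linearization is $\frac{\mathrm d}{\mathrm dt}-L_x$. Thus $\Phi_{W\tilde W}=\mathrm{id}+N$ with $N$ nilpotent, which is a chain isomorphism.
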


As a corollary, we obtain Theorem \ref{thm:main}.

 \begin{thm}
 Let $f$ in \eqref{eq:functional}, with $G$ satisfying \eqref{eq:growthG}, be a Morse function. Then, Morse homology for $f$ is well-defined and isomorphic to $H_*(X;\Z_2)$. In particular, the system of quasilinear elliptic problems \eqref{eq:system} has at least one solution for every choice of non-linearity satisfying \eqref{eq:growthG}. 
 \label{teo:homologytrivial}
 \end{thm}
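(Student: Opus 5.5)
The plan is to use the functoriality Theorem \ref{teo:functoriality}(2) to replace $G$ by a model non-linearity for which the Morse complex can be computed by hand. Well-definedness of $HM_*(f;\Z_2)$ for a Morse $f$ is already settled by Sections \ref{sec:2}--\ref{sec:4}: Proposition \ref{prop:gradientc2} supplies $W$, Theorem \ref{teo:precompactness} gives precompactness of connecting sets, and the transversality argument of Section \ref{sec:4} gives the Morse-Smale property up to order two after a generic compact perturbation. By Theorem \ref{teo:functoriality}(1) the homology is independent of $W$.

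Next, I would compare $G$ with a quadratic model. Take $G_0(x_1,x_2):=\tfrac{\lambda}{2}x_2^2-\tfrac{\lambda}{2}x_1^2 + c_0$ with $\lambda>0$ small, which satisfies \eqref{eq:growthG} since $2<\beta_i$. By Theorem \ref{teo:functoriality}(2) the homology does not depend on the non-linearity. If a direct comparison $G_0\le G\le G_0+c$ is not available, I would interpolate, say through $\min$ and $\max$ type combinations smoothed and perturbed generically to stay Morse, so that a chain $G_0-c\le \tilde G\le G$ etc. connects them. The key point is that the composition identity $\phi_{G_0,G_0+c}=\mathrm{id}$ forces the comparison maps to be isomorphisms.

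For $G_0$ the functional is $f_0(u,v)=\tfrac1{2p}\int(1+|\nabla u|^2)^p+\tfrac\lambda2\int u^2-\tfrac1{2q}\int(1+|\nabla v|^2)^q-\tfrac\lambda2\int v^2$, up to a constant. This is strictly convex in $u$ and strictly concave in $v$, and decoupled. Its unique critical point is $(0,0)$, and it is non-degenerate: $\mathrm d^2f_0(0,0)$ is injective, with $X^-=\{0\}\times W^{1,2q}_0$ and $X^+=W^{1,2p}_0\times\{0\}$. Hence the relative Morse index is $\mu_-(0,0)=0$. The complex therefore has a single generator in degree $0$ and zero differential, so $HM_*(f_0)\cong\Z_2$ in degree $0$, which equals $H_*(X;\Z_2)$ because $X$ is contractible. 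Transferring this to $f$ gives the isomorphism.

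Existence then follows. For a generic $G$, $f$ is Morse by the remark after Proposition \ref{prop:hyperbolic}, and $HM_0\neq0$ forces a critical point. For arbitrary $G$ satisfying \eqref{eq:growthG}, approximate it by generic $G_k\to G$ in $C^2_{\mathrm{loc}}$ with uniform growth constants. Critical points $(u_k,v_k)$ of $f_k$ lie in a fixed ball $B_R^X(0)$ by Remark \ref{rmk:emptysublevel}. The compactness argument of Lemma \ref{lem:Cerami}, namely $u_k=D_p^{-1}(-K_u^{(k)})$ with convergence in $L^\infty$, yields a convergent subsequence whose limit is a critical point of $f$.

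The main obstacle is the comparison step. Theorem \ref{teo:functoriality}(2) needs ordered non-linearities, while a general $G$ is not squeezed between shifts of $G_0$. I would first try the chain $G_0-c\le\min(G,G_0)\le G$ together with $\min(G,G_0)\le G_0$, suitably smoothed and made Morse. I would then deduce that all the maps are isomorphisms from the composition rules combined with the shift identity.
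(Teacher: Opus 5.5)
Your overall strategy is the paper's. You use Theorem \ref{teo:functoriality} to pass to a decoupled convex--concave model whose only critical point is $(0,0)$, which is non-degenerate with $\mu_-(0,0)=0$. The complex is then $\Z_2$ in degree $0$, which equals $H_*(X;\Z_2)$ since $X$ is contractible. Two parts of your execution do not hold up, however.

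\textbf{The quadratic model.} Your assertion $2<\beta_i$ is false in general. In the second regime $\beta_i=\min\{p,q\}$. In the first regime, Remark \ref{rmk:linear} (via Claim 2) requires the admissible exponents to stay below $\min\{2p-1,2q-1\}$. So for $n=2$ with, say, $p=q=1.2$ or $p=1.1$, $q=5$, quadratic growth is not allowed by \eqref{eq:growthG}. For such exponents Theorem \ref{teo:functoriality} cannot be applied to your $G_0$. The paper takes $G\equiv 0$, which is always admissible. It gives the same picture you describe: $(0,0)$ is the unique critical point, $X^-=\{0\}\times W^{1,2q}_0(\Omega)$ and $X^+=W^{1,2p}_0(\Omega)\times\{0\}$, and $W=(-\mathrm{id},\mathrm{id})$ is a gradient-like field.

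\textbf{The comparison step.} This is the step you single out as the main obstacle. The paper does no interpolation here: it simply invokes the concluding clause of Theorem \ref{teo:functoriality}(2), which asserts independence of the non-linearity outright. Your attempt to derive that clause from the ordered maps cannot work.

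\begin{itemize}
\item The composition and shift identities only show that $\phi_{G,G_0+c}$ has right inverse $\phi_{G_0,G}$ and left inverse $\phi_{G_0+c,G+c}$ when $G_0\le G\le G_0+c$. That is, they give isomorphisms only when $\sup|G-G_0|<\infty$.
\item Any zigzag of such steps, built from $\min$'s, $\max$'s or otherwise, again has bounded total difference. It therefore never reaches a $G$ with $G-G_0$ unbounded, which is precisely the case you need.
\item Already the first link $G_0-c\le\min(G,G_0)$ requires $G\ge G_0-c$. With your quadratic $G_0$ this fails even for $G\equiv 0$, since $G_0-c\to+\infty$ along $x_1=0$.
\end{itemize}

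Deriving independence for non-linearities of different growth would need a genuinely new input. One option is continuation maps along the non-monotone homotopy $(1-s)G_0+sG$, using that the a priori bound of Remark \ref{rmk:emptysublevel} depends only on the growth constants. If you instead quote the theorem as the paper does, the issue disappears.

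\textbf{Existence.} Your approximation argument for non-Morse $G$ is unnecessary. If $f$ is not Morse, it has by definition a degenerate critical point, and that critical point already solves \eqref{eq:system}. This is exactly how the paper closes the proof.
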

 \begin{proof}
 If $f$ is a Morse function, the claim follows from Theorem \ref{teo:functoriality}, as we can equivalently compute $HM_*(f_0,W;Z_2)$, where $f_0$ is associated with $G\equiv 0$. In this case, $f_0$ has a unique critical point, namely $(0,0)\in X$, which is obviously non-degenerate and has relative Morse index $\mu_-(0,0)=0$, and we can choose $W=(-\text{id},\text{id})$ as gradient-like vector field for $f$. As $(0,0)$ is the only critical point, the associated chain complex has only one generator in degree zero and the boundary operator vanishes identically, thus yielding 
 $$HM_*(f_0,W;Z_2) = \left \{ \begin{array}{r} \mathbb Z_2 \ \quad \text{for} \ *=0, \\ (0) \quad \text{otherwise}.\end{array}\right.$$
 If $f$ is not a Morse function, then there must exist at least one degenerate critical point. 
 \end{proof}
 
Abstracting from the concrete setting of Theorem \ref{teo:homologytrivial}, we now provide sufficient conditions ensuring that the Morse homology of a $C^2$-smooth strongly indefinite functional 
$f$ on a Banach space is well defined. 
 
 \begin{thm}
 Let $X$ be a reflexive Banach space, and let $f:X\to \R$ be a functional of class $C^2$ satisfying the Palais-Smale condition. Assume further that there exists a splitting 
 $X= \mathcal E^- \oplus \mathcal E^+$
 such that:
 \begin{enumerate}
 \item[1.] $\mathrm{(compatible\ splitting\ at\ critical\ points)}$ For each $\bar x \in \mathrm{crit} (f)$, there exist a splitting $X = X^- \oplus X^+$ and a constant $c>0$ such that 
 $$\pm \mathrm d^2f(\bar x)[v^\pm,v^\pm] \geq 2c \|v^\pm \|^2_w, \quad \forall v\in X^\pm,$$
 for some possibly weaker norm $\|\cdot\|_w$, and such that $(\mathcal E^-, X^+)$ is a Fredholm pair. 
 \item[2.] $\mathrm{(persistence\ of\ splitting)}$ For each $\bar x \in \mathrm{crit} (f)$ there exists $\delta >0$ such that 
 $$\pm \mathrm d^2f(\bar x+y)[v^\pm,v^\pm] \geq c \|v^\pm \|^2_w, \quad \forall v\in X^\pm, \ \forall \|y\|<\delta.$$
 \item[3.] $\mathrm{(gradient}$-$\mathrm{like\ vector\ field)}$ The local gradient-like vector fields $L_{\bar x}$ for $f$ 
 $$L_{\bar x} = \mathrm{id}_{X^-} - \mathrm{id}_{X^+} = \mathrm{id}_{\mathcal E^-} - \mathrm{id}_{\mathcal E^+} + K_{\bar x},$$
given by Proposition \ref{prop:hyperbolic}, can be extended to a $C^2$-smooth gradient-like vector field $W$, with linear growth and such that $(f,W)$ satisfies the Palais-Smale condition,  of the form 
 $$W= \mathrm{id}_{\mathcal E^-} - \mathrm{id}_{\mathcal E^+} + K$$
 with $K:X\to X$ compact.
 \end{enumerate}
 Then, the Morse homology $HM_*(f;\Z_2)$ is well-defined. 
 \label{thm:abstract}
 \end{thm}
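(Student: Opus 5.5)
The proof retraces Sections 2--4 with the concrete splitting $W^{1,2p}_0(\Omega)\times\{0\}\oplus\{0\}\times W^{1,2q}_0(\Omega)$ replaced by the abstract splitting $\mathcal E^-\oplus\mathcal E^+$. The projection $\mathrm{pr}_1$ is replaced by the projection $P_{\mathcal E^-}$ along $\mathcal E^+$.

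\textbf{Step 1: non-degeneracy and relative index.} For $\bar x\in\mathrm{crit}(f)$, hypotheses 1 and 2 let us repeat the computation in the proof of Proposition \ref{prop:hyperbolic} verbatim. With $L_{\bar x}=\mathrm{id}_{X^-}-\mathrm{id}_{X^+}$ we get $\mathrm d f(\bar x+y)[L_{\bar x}y]\le -c\|y\|_w^2<0$ for $0<\|y\|<\delta$. Hence every critical point is non-degenerate in the sense of Definition \ref{def:nondegintro}. Since $(\mathcal E^-,X^+)$ is a Fredholm pair, we define the relative Morse index by
\[
\mu_-(\bar x):=\mathrm{ind}(\mathcal E^-,X^+).
\]
This is the natural analogue of the index used in Section 2, with a normalization sign fixed so that $\mu_-(\bar x)=0$ when $X^\pm=\mathcal E^\pm$. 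Finally, as in Remark \ref{rmk:emptysublevel}, the Palais--Smale condition together with non-degeneracy makes $\mathrm{crit}(f)$ discrete and compact in bounded strips, which is all the subsequent arguments use.

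\textbf{Step 2: compactness.} Call a bounded set $\mathcal A$ essentially vertical if $P_{\mathcal E^-}\mathcal A$ is precompact. Proposition \ref{prop:modulestructure} then holds with the same proof: vector fields of the form $\mathrm{id}_{\mathcal E^-}-\mathrm{id}_{\mathcal E^+}+K$ with $K$ compact preserve essentially vertical sets. The reason is that the linear part $e^{t(\mathrm{id}_{\mathcal E^-}-\mathrm{id}_{\mathcal E^+})}$ preserves the splitting, and the variation of constants formula adds a compact contribution on bounded time intervals. Hypothesis 3 gives a field $W$ that meets the assumptions of Theorem \ref{teo:precompactness}, so the sketch after that theorem applies.

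The one new point is that near $\bar x_-$ the exit points lie in $X^-_{\bar x_-}$, and we need them to be essentially vertical. This holds because $X^-$ and $\mathcal E^-$ differ by a compact perturbation. Indeed, $L_{\bar x}-(\mathrm{id}_{\mathcal E^-}-\mathrm{id}_{\mathcal E^+})=K_{\bar x}$ is compact, so the spectral projections $P_{X^-}$ and $P_{\mathcal E^-}$ differ by a compact operator. This is the Banach replacement of \cite[Prop.~2.2]{AM:01}, obtained from the Riesz projection integral, where the difference of resolvents is compact. The same argument shows that bounded subsets of $X^+$ have precompact $P_{\mathcal E^-}$-projection, i.e.\ they are essentially horizontal, and this gives precompactness of $W^u(\bar x_-)\cap W^s(\bar x_+)$.

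\textbf{Step 3: Fredholm theory and transversality.} Along a connecting orbit $\gamma$, $\mathrm dW(\gamma(t))=\mathrm{id}_{\mathcal E^-}-\mathrm{id}_{\mathcal E^+}+\mathrm dK(\gamma(t))$, and this is asymptotic to the hyperbolic operators $L_{\bar x_\pm}$. Lemma \ref{lem:Fredholmness} and Corollaries \ref{cor:Fredholmness}, \ref{cor:surjective} therefore apply. They give index
\[
\mathrm{ind}(X^-_{\bar x_-},X^+_{\bar x_+})=\mu_-(\bar x_-)-\mu_-(\bar x_+),
\]
where the equality is the additivity (cocycle) property of relative dimensions for compactly perturbed pairs. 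Next we take the perturbation class $\mathfrak C$ of compact $C^2$ fields with linear growth vanishing near $\mathrm{crit}(f)$, normed as in Section 4. Properties (B1)--(B6) hold, with (B5) needing $C^2$ bump functions. The Quinn--Sard theorem, together with the $\sigma$-properness argument of \cite{Asselle:2022}, then yields a generic $C$ for which $W+C$ is Morse--Smale up to order two. The chain complex, its boundary operator, and $\partial^2=0$ are then defined as in \cite[Section 6]{Asselle:2022}.

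\textbf{Main obstacle.} I expect the difficulty to lie in the Banach-space substitutes for Hilbert tools, namely:
\begin{itemize}
\item the compactness of $P_{X^-}-P_{\mathcal E^-}$;
\item the existence of $C^2$ partitions of unity on $X$;
\item $\sigma$-properness.
\end{itemize}
For the first I would use the resolvent formula. For the second, since it is not automatic for a general reflexive $X$, I would add it as an implicit standing assumption, or note that it is only needed in (B4)--(B5). For the third I would follow \cite{Asselle:2022}, using closedness of the essentially vertical family under Hausdorff limits.
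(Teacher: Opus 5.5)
Your three steps follow the outline the paper gives after the statement: Conditions 1--2 give non-degeneracy as in Proposition~\ref{prop:hyperbolic}, Condition 3 gives precompactness as in Theorem~\ref{teo:precompactness}, and transversality follows as in Section~\ref{sec:4}. Your caveat about $C^2$ bump functions is well taken. They are needed for (B4)--(B5), and some reflexive spaces, e.g.\ $\ell^r$ with $1<r<2$, have none. You also do not need a resolvent integral: $P_{X^-}=\tfrac12(\mathrm{id}+L_{\bar x})$ and $P_{\mathcal E^-}=\tfrac12(\mathrm{id}+\mathrm{id}_{\mathcal E^-}-\mathrm{id}_{\mathcal E^+})$, so $P_{X^-}-P_{\mathcal E^-}=\tfrac12K_{\bar x}$. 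However, two of your conventions are reversed, and as written the corresponding steps fail.

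\textbf{Essentially vertical sets.} Since $W=(-u,v)+K$ must equal $\mathrm{id}_{\mathcal E^-}-\mathrm{id}_{\mathcal E^+}+K$, in the concrete case $\mathcal E^-=\{0\}\times W^{1,2q}_0(\Omega)$. So $\mathrm{pr}_1$ corresponds to the projection $P_{\mathcal E^+}$ onto $\mathcal E^+$ along $\mathcal E^-$, not to $P_{\mathcal E^-}$. With your definition, the key claim of Step 2 fails. On $X^-_{\bar x_-}$ one has $P_{\mathcal E^-}=\mathrm{id}-\tfrac12K_{\bar x_-}$, so a bounded subset of $X^-_{\bar x_-}$ has precompact $P_{\mathcal E^-}$-image only if it is itself precompact, which is what you are trying to prove. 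What the compact perturbation actually gives is $P_{\mathcal E^+}|_{X^-}=\tfrac12K_{\bar x}|_{X^-}$ and $P_{\mathcal E^-}|_{X^+}=-\tfrac12K_{\bar x}|_{X^+}$. Your (correct) statement about $X^+$ then makes entry sets ``vertical'' in your own sense too. The last step therefore never combines two complementary precompact projections. Define essential verticality through $P_{\mathcal E^+}$ instead; the flow-invariance argument is unchanged, since $e^{t(\mathrm{id}_{\mathcal E^-}-\mathrm{id}_{\mathcal E^+})}$ commutes with both projections.

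\textbf{Sign of the relative index.} Your index is $\mathrm{ind}(\mathcal E^-,X^+)=\dim(\mathcal E^-\cap X^+)-\mathrm{codim}(\mathcal E^-+X^+)$. In finite dimensions this equals $\dim\mathcal E^--\dim X^-$. In the concrete case it equals $\dim(\mathcal E^-\cap X^+)-\dim(\mathcal E^+\cap X^-)$, i.e.\ \emph{minus} the $\mu_-$ of Section~\ref{sec:2}. The normalization ``$\mu_-=0$ when $X^\pm=\mathcal E^\pm$'' does not fix the sign. The restricted projections $\mathcal E^-\to X^-$ (along $X^+$) and $X^-\to\mathcal E^-$ (along $\mathcal E^+$) compose to identity plus compact, so $\mathrm{ind}(X^-,\mathcal E^+)=-\mathrm{ind}(\mathcal E^-,X^+)$. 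The cocycle property then gives, with your definition,
\[
\mathrm{ind}(X^-_{\bar x_-},X^+_{\bar x_+})=\mathrm{ind}(X^-_{\bar x_-},\mathcal E^+)+\mathrm{ind}(\mathcal E^-,X^+_{\bar x_+})=\mu_-(\bar x_+)-\mu_-(\bar x_-),
\]
which is the opposite of what you claim. Taken literally, your grading is reversed. The Morse--Smale condition up to order two would be imposed on the wrong pairs, whose true index may be arbitrarily large, so Quinn--Sard does not apply. Moreover, $\partial$ would count connecting manifolds of negative dimension. Setting $\mu_-(\bar x):=\mathrm{ind}(X^-,\mathcal E^+)=-\mathrm{ind}(\mathcal E^-,X^+)$ repairs this. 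With both corrections, your argument coincides with the paper's sketch.
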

 
We conclude the paper with a brief discussion of how far we expect the theorem to generalize to broader settings.
First, Conditions 1 and 2 imply that critical points are non-degenerate in the sense of Definition \ref{def:nondegintro}. Following \cite{Asselle:2025}, Condition 2 may be relaxed 
 by requiring that its failure be suitably controlled. 
Second, Condition 3 implies that stable and unstable manifold of pairs of critical points have precompact intersection. As in Section \ref{sec:4}, these intersections can be made transverse by a generic perturbation of the gradient-like vector field $W$ within the class of vector fields of the form ``compact perturbation of $\mathrm{id}_{\mathcal E^-} - \mathrm{id}_{\mathcal E^+}$''. Following \cite{AM:03},  the Fredholm analysis of the operator $D_A$ in \eqref{eq:DA} is expected to generalize to continuous paths $A(t) = \tilde A(t) + K(t),$
 where $\tilde A(t)$ respects the splitting $X=\mathcal E^-\oplus \mathcal E^+$ and $K(t)$ is compact for all $t\in \R$. According to \cite{AM:03}, this should be the most general situation in which the good Fredholm properties of $D_A$ persist. We do not pursue this extension here, and leave it to future work instead.


\bibliography{_biblio}
\bibliographystyle{plain}

\end{document}